\newtheorem{Theorem}{Theorem}[section]
\newtheorem{Proposition}[Theorem]{Proposition}
\newtheorem{Lemma}[Theorem]{Lemma}
\newtheorem{Corollary}[Theorem]{Corollary}
\theoremstyle{definition}
\newtheorem{Definition}[Theorem]{Definition}
\newtheorem{Remark}[Theorem]{Remark}
\newcommand{\bTheorem}[1]{
\begin{Theorem} \label{T#1} }
\newcommand{\eT}{\end{Theorem}}
\newcommand{\bProposition}[1]{
\begin{Proposition} \label{P#1}}
\newcommand{\eP}{\end{Proposition}}
\newcommand{\bLemma}[1]{
\begin{Lemma} \label{L#1} }
\newcommand{\eL}{\end{Lemma}}
\newcommand{\bCorollary}[1]{
\begin{Corollary} \label{C#1} }
\newcommand{\eC}{\end{Corollary}}
\newcommand{\bRemark}[1]{
\begin{Remark} \label{R#1} }
\newcommand{\eR}{\end{Remark}}
\newcommand{\bDefinition}[1]{
\begin{Definition} \label{D#1} }
\newcommand{\eD}{\end{Definition}}
\newcommand{\Ds}{\mathbb{D}_x}
\newcommand{\vre}{\rho^{\ep}}
\newcommand{\vue}{\bu^{\ep}}
\newcommand{\bfphi}{\boldsymbol{\varphi}}
\newcommand{\bFormula}[1]{
\begin{equation} \label{#1}}
\newcommand{\eF}{\end{equation}}
\newcommand{\Ov}[1]{\overline{#1}}
\newcommand{\DC}{C^\infty_c}
\newcommand{\aleq}{\stackrel{<}{\sim}}
\newcommand{\vr}{\rho}
\newcommand{\Div}{{\rm div}_x}
\newcommand{\Grad}{\nabla_x}
\newcommand{\dx}{\,{\rm d} {x}}
\newcommand{\dt}{\,{\rm d} t }
\newcommand{\intO}[1]{\int_{\Omega} #1 \ \dx}
\newcommand{\intRd}[1]{\int_{\R^d} #1 \ \dx}
\newcommand{\D}{{\rm d}}
\newcommand{\R}{\mathbb{R}}
\newcommand{\br}{ \nonumber \\ }
\newcommand{\mc}{\mathcal}
\newcommand{\vc}{\mathbf}
\newcommand{\ep}{\varepsilon}
\newcommand{\bu}{\mathbf{u}}
\newcommand{\bphi}{\bm{\varphi}}
\renewcommand{\leq}{\leqslant}
\renewcommand{\geq}{\geqslant}
\def\softd{{\leavevmode\setbox1=\hbox{d}%
          \hbox to 1.05\wd1{d\kern-0.4ex{\char039}\hss}}}
\definecolor{Cgrey}{rgb}{0.85,0.85,0.85}
\definecolor{Cblue}{rgb}{0.50,0.85,0.85}
\definecolor{Cred}{rgb}{1,0,0}
\definecolor{fancy}{rgb}{0.10,0.85,0.10}
\newcommand\Cbox[2]{%
    \newbox\contentbox%
    \newbox\bkgdbox%
    \setbox\contentbox\hbox to \hsize{%
        \vtop{
            \kern\columnsep
            \hbox to \hsize{%
                \kern\columnsep%
                \advance\hsize by -2\columnsep%
                \setlength{\textwidth}{\hsize}%
                \vbox{
                    \parskip=\baselineskip
                    \parindent=0bp
                    #2
                }%
                \kern\columnsep%
            }%
            \kern\columnsep%
        }%
    }%
    \setbox\bkgdbox\vbox{
        \color{#1}
        \hrule width  \wd\contentbox %
               height \ht\contentbox %
               depth  \dp\contentbox
        \color{black}
    }%
    \wd\bkgdbox=0bp%
    \vbox{\hbox to \hsize{\box\bkgdbox\box\contentbox}}%
    \vskip\baselineskip%
}
\date{}
\begin{document}

%%%%%%%%%%%%%%%%%%%%%%%%%%%%%%%%

\title{On the motion of a large number of small rigid bodies in a viscous incompressible fluid}

\author{Eduard Feireisl
	\thanks{The work of E.F. was partially supported by the
		Czech Sciences Foundation (GA\v CR), Grant Agreement
		21--02411S. The Institute of Mathematics of the Academy of Sciences of
		the Czech Republic is supported by RVO:67985840. A.R and A.Z have been partially supported by the Basque Government through the BERC 2022-2025 program and by the Spanish State Research Agency through BCAM Severo Ochoa excellence accreditation SEV-2017-0718 and through project PID2020-114189RB-I00 funded by Agencia Estatal de Investigación (PID2020-114189RB-I00 / AEI / 10.13039/501100011033). A.Z. was also partially supported  by a grant of the Ministry of Research, Innovation and Digitization, CNCS - UEFISCDI, project number PN-III-P4-PCE-2021-0921, within PNCDI III. The research of A.R. has been supported by the Alexander von Humboldt-Stiftung / Foundation.} 
	\and Arnab Roy$^1$ \and Arghir Zarnescu$^{2,3,4}$
}

\date{\today}

\maketitle

\bigskip

\centerline{$^*$  Institute of Mathematics of the Academy of Sciences of the Czech Republic,}

\centerline{\v Zitn\' a 25, CZ-115 67 Praha 1, Czech Republic}

\centerline{$^1$ Technische Universit\"{a}t Darmstadt,}

\centerline{Schlo\ss{}gartenstra{\ss}e 7, 64289 Darmstadt, Germany.}

\centerline{$^2$ BCAM, Basque Center for Applied Mathematics,}

\centerline{Mazarredo 14, E48009 Bilbao, Bizkaia, Spain}

\centerline{$^3$IKERBASQUE, Basque Foundation for Science, }

\centerline{Plaza Euskadi 5, 48009 Bilbao, Bizkaia, Spain}

\centerline{$^4$`Simion Stoilow" Institute of the Romanian Academy,}

\centerline{21 Calea Grivi\c{t}ei, 010702 Bucharest, Romania }

\maketitle

\begin{abstract}
	
	We consider the motion of $N$ rigid bodies -- compact sets $(\mathcal{S}^1_\varepsilon, \cdots, \mathcal{S}^N_\varepsilon )_{\varepsilon > 0}$ --  immersed in a viscous incompressible fluid 
	contained in a {domain in} the Euclidean space $\mathbb{R}^d$, $d=2,3$. 
	We show the fluid flow is not influenced by the presence of the infinitely many bodies in the asymptotic limit $\varepsilon \to 0$ and $N=N(\varepsilon)\rightarrow\infty$ 
	as soon as 
	\[ 
	{\rm diam}[\mathcal{S}^i_\varepsilon ] \to 0 \ \mbox{as}\ \varepsilon \to 0 ,\ i=1,\cdots, N(\varepsilon).
	\]
	The result depends solely on the geometry of the bodies and is independent of their mass densities. Collisions are allowed and the initial data are arbitrary with finite energy.

\end{abstract}

{\bf Keywords:} Navier-Stokes system, body--fluid interaction problem, small light rigid body 
\bigskip

%\tableofcontents

\section{Introduction}
\label{i}

There is a number of studies concerning the impact of a small rigid body immersed in a viscous fluid on the fluid motion. A general approach used so far is based on the idea that if the body is small but ``heavy'', meaning its mass density $\rho^{\ep}_{\mc{S}}$ is large, its velocity can be controlled and the resulting situation is therefore close to the rigid obstacle problem. He and Iftimie \cite{HeIft1}, \cite{HeIft2} exploited this idea to handle the case when the body mass density satisfies 
$\rho^{\ep}_{\mc{S}} \to \infty$, while the rigid body diameter is proportional to a small number $\ep$. More recently, 
Bravin and Ne\v casov\' a \cite{BraNec2} showed that if the density is ``very large'', the rigid object keeps moving with its initial velocity not being influenced by the fluid. Note that these results are slightly at odds with 
a physically relevant hypothesis that the body density should be at least bounded and also with a commonly accepted 
scenario that a \emph{light} particle should not have any major impact on the fluid motion. 

To the best of our knowledge, the only available result concerning a body with a constant density 
was obtained by Lacave and Takahashi \cite{LacTak} in the case of the planar motion. Their technique, similarly to a
more recent paper by Tucsnak et al. \cite{SDM2022}, is based on the $L^p-L^q$ theory for the associated solution 
semigroup and requires smallness of the initial fluid velocity. The authors also note that the result can be extended to the case of several ``massive'' bodies up to the first contact. 

Our goal is to extend the main result of 
\cite{LacTak} to the case of several bodies 
$(\mathcal{S}^1_\ep, \cdots , \mathcal{S}^N_\ep)_{\ep > 0}$
that may collide in the evolution process. More specifically, our result holds under the following hypotheses:

\begin{itemize}
	
	\item The fluid is confined to an arbitrary domain $\Omega \subset \mathbb{R}^d$, d=2,3. 
	
	\item The mass density of the bodies is irrelevant. The only restriction concerns their shape. 
	Specifically, we suppose  
	\begin{align} 
		D_\ep &\equiv \max_{i=1, \cdots, N} \{ {\rm diam}[ \mathcal{S}^i_\ep ] \} \to 0 \ \mbox{as}\ \ep \to 0, \br
		0 < \lambda D_\ep^\beta &\leq |\mathcal{S}_\ep |  \ \mbox{as}\ \ep \to 0,\ 
		d {\leq } \beta < \left\{ \begin{array}{l} 15 \ \mbox{if}\  d = 3, \\ \\ 
	\mbox{arbitary finite if}\ d = 2, \end{array} \right. \label{cond}
		\end{align}
	{for some $\lambda > 0$ independent of $\ep$}.
	
	\item The result is global in time and holds in the class of weak solutions and for any finite--energy initial data.
	
	\end{itemize}

Very roughly indeed, we may conclude that the effect of a finite number of rigid bodies is negligible as soon as their diameters are small whereas their 
mass densities are irrelevant. In addition, hypothesis \eqref{cond} allows different bodies to shrink to zero in different order of scaling.

Let us mention that  there are some results in the context of rigid obstacles in viscous Newtonian fluids. The flow around a small rigid obstacle 
was studied by Iftimie et al. \cite{MR2244381}. Lacave \cite{MR2557320} studies the limit of a viscous fluid flow
in the exterior of a thin obstacle shrinking to a curve.

 We use the framework of \emph{weak solutions} in the spirit of Judakov \cite{MR0464811}, Gunzburger, Lee and Seregin \cite{GLSE} or Galdi \cite{GAL1}. The relevant existence theory for the fluid structure interaction problem was 
developed by San Martin, Starovoitov, and Tucsnak \cite{SST} for $d=2$ and in \cite{F3} for $d=3$. In both cases, the solutions are global--in--time and allow for possible collisions of the bodies and also collisions with the domain boundary.

Similarly to the companion paper \cite{FRZ2} concerning compressible fluids, our approach is based on a new \emph{restriction operator} that assigns a given function its ``projection'' on the space of rigid motions attached to the bodies. We point out that accommodation of several bodies needs a nontrivial modification of the construction presented 
in \cite{FRZ2}. In addition, we show new ``negative norm'' estimates of the restriction operator that are of independent interest and can be used in problems involving compressible flows.

The new restriction operator improves considerably the error estimates necessary to perform the asymptotic limit. Another new ingredient is that we use the dissipation energy rather than the energy itself to obtain suitable bounds on the rigid {body translation} velocity. This is why the result is independent of the mass densities of the bodies.

The paper is organized as follows. In Section \ref{m}, we formulate the problem and state our main result. {Next, in Section \ref{u}, we derive uniform bounds on the sequence of solutions to the fluid--structure interaction problem independent of the scaling parameter}. {In Section \ref{sec:test}, we introduce a restriction operator suitable for modifying the test function in the weak formulation of the problem.} The convergence analysis and the proof of the main result are done in Section \ref{sec:conv}.

\section{Problem formulation, main result}
\label{m}

We consider a {domain} $\Omega \subset \mathbb{R}^d$, $d=2,3$, containing a viscous, incompressible Newtonian fluid. 
Accordingly the fluid velocity $\bu$ satisfies the Navier--Stokes system of equations 
\begin{align} 
\Div \bu &= 0, \label{m1} \\ 	
\partial_t \bu + \Div (\bu \otimes \bu) + \Grad \Pi &= {\Div \mathbb{S} (\Ds \bu)} + \vc{g}, \label{m2} \\ 
{\mathbb{S}(\Ds \bu) = \mu \Ds \bu },\ 
\Ds \bu& = \frac{ \Grad \bu + \Grad^t \bu }{2} ,\ \mu > 0 \label{m3}, 
	\end{align}
	%\ \mathbb{S} (\Ds \bu) &= \mu \Ds \bu,
where $\Pi$ is the pressure and the function $\vc{g}$ denotes an external {volume} force. 

The rigid bodies are represented by compact connected sets $\mc{S}^i 
\subset \mathbb{R}^d$, $i=1,\cdots, N$.
We suppose the bodies are immersed in the fluid and their position at a time $t > 0$ is determined by a family of 
affine isometries $(\sigma^i(t))_{t \geq 0}$, 
\[
\mc{S}^i (t) = \sigma^i [\mc{S}^i],\ \sigma^i(t)x = 
\mathbb{O}_{i} (t) x + \vc{h}_{i} (t),\ \mathbb{O}_i \in SO(d),\ t \geq 0,\ i = 1, \cdots, N.
\]
In addition, we introduce the associated rigid velocity fields, 
\begin{equation} \label{m5}
\bu_{\mc{S}^i} (t,x) = \vc{Y}_i  + \mathbb{Q}_{i} ( x - \vc{h}_i),\ 
\vc{Y}_i(t)  = \frac{\D }{\dt } \vc{h}_i (t) ,\ \mathbb{Q}_i(t) = 
\frac{ \D }{\dt} \mathbb{O}_i (t) \circ {\mathbb{O}_i}^{-1}(t).
\end{equation}

Finally, we identify the fluid region 
\[
\Omega_{f}(t) = \Omega \setminus \cup_{i = 1}^N \mc{S}^i(t),\ Q_{f} = 
\left\{ (t,x) \ \Big|\ t \in (0,T),\ x \in \Omega_{f}(t) \right\}
\]

\subsection{Weak formulation}
\label{ws}

{We suppose that the rigid bodies are immersed in the fluid. As the fluid is viscous, a natural working hypothesis asserts that both 
the velocity and the momentum coincide on the body boundary, see e.g. Galdi \cite{GAL1}.	 
Accordingly, a suitable} weak formulation of the fluid--structure interaction problem (see \cite{F3}) is based on the quantities $\left[ \rho, \bu, (\vc{h}^1 \cdots \vc{h}^N) , (\mathbb{O}^1 \cdots , \mathbb{O}^N) \right]$. In this context, it is convenient to consider both the mass density $\rho = \rho(t,x)$ and the velocity $\bu = \bu(t,x)$ as functions defined for all $x \in \R^{d}$.

\subsubsection{Regularity}

\begin{itemize} 
	
	\item The mass density $\rho$ is non--negative, 
	\begin{equation} \label{m6}
		\rho \in L^\infty ((0,T) \times \R^{d}) \cap C^1([0,T]; L^1(\Omega)).
		\end{equation}
	\item The velocity $\bu$ belongs to the Ladyzhenskaya class 
	\begin{equation} \label{m7}
		\bu \in L^\infty(0,T; L^2(\R^{d}; \R^{d})) \cap L^2(0,T; W^{1,2}(\R^{d}; \R^{d})),\ \Div \bu = 0.
		\end{equation}
	
	\item The affine isometries are Lipschitz continuous in time, 
	\begin{equation} \label{m8}
		\vc{h}_i \in W^{1, \infty}(0,T; \R^{d}),\ \mathbb{O}_i \in W^{1,\infty} (0,T; SO(d) ),\ i =1, \cdots, N.
		\end{equation}
	\end{itemize}

\subsubsection{Compatibility}

\begin{itemize}
	\item
	\begin{align} 
		\rho(t,x) &= 1 \ \mbox{for a.a.}\ x \in \Omega_f (t), \br
		\rho(t,x) &= \rho_{\mathcal{S}^i} \ \mbox{for a.a.} \ x \in \mathcal{S}^i(t),\ i = 1,\cdots,N
		 \label{m9}
		\end{align} 
	for any $t \in [0,T]$.
	\item 
	
	\begin{align} 
		\bu(t, \cdot) &\in W^{1,2}_0 (\Omega; \R^{d}) \ \label{m10} \\ 
		(\bu - \bu_{\mc{S}^i})(t, \cdot) &\in W^{1,2}_0 (\R^{d} \setminus \mc{S}^i(t); \R^{d}) \label{m11} \\
		&\mbox{for a.a.}\ t \in (0,T),\ i = 1,\cdots, N .\label{m12}
		\end{align}
	
	\end{itemize}

\subsubsection{Mass conservation} 

The equation of continuity  
\begin{equation} \label{m13}
 \int_0^T \int_{\R^{d}}{ \Big[ \rho \partial_t \varphi + \rho \bu \cdot \Grad \varphi \Big]  } \dt = - 
 \int_{\R^{d}}{ \rho_0 \varphi (0, \cdot) }	
	\end{equation}
holds for any $\varphi \in C^1_c([0,T) \times \R^{d})$. 

\subsubsection{Momentum {balance}} 

The momentum equation 
\begin{align}
\int_0^T &\int_{\R^{d}}{ \Big[ \rho \bu \cdot \partial_t \bfphi + \rho\bu \otimes \bu : \Ds \bfphi \Big] } \dt \br &= 
\int_0^T \int_{\R^{d}}{ \Big[ {\mathbb{S}( \Ds \bu ) } : \Ds \bfphi - \rho \vc{g} \cdot \bfphi \Big] } \dt - 
\int_{\R^{d}}{ \rho_0 \bu_0 \cdot \bfphi }	\label{weak:mom}
	\end{align}
holds for any function $\bfphi \in C^1_{c} ([0,T) \times \Omega; \R^{d})$, $\Div \bfphi = 0$ 
satisfying 
\begin{equation} \label{m15}
\Ds \bfphi (t, \cdot) = 0 \ \mbox{on an open neighbourhood of}\ \mc{S}^i(t) 
\ \mbox{for any}\ t \in [0,T),\ i = 1, \cdots, N.	
	\end{equation}

\subsubsection{Total energy dissipation} 

The energy inequality 
\begin{equation} \label{eq2}
	\int_{\R^{d}}{ \rho |\bu|^2 (\tau, \cdot) } + \int_0^{{\tau}} \int_{\R^{d}}{ {\mathbb{S}(\Ds \bu) : \Ds \bu } } \leq 
	\int_{\R^{d}}{ \rho_0 |\bu_0|^2  } + \int_0^{{\tau}} \int_{\R^{d}}{ \rho \vc{g} \cdot \bu } \dt
\end{equation} 
holds for a.a. $\tau \in (0,T)$. 

\subsection{Main result}

We are ready to state our main result. 

%\begin{mdframed}[style=MyFrame]

\begin{Theorem} [{\bf Asymptotic limit}] \label{Thm:main}	
	Let $\mathcal{S}^i_\ep \subset \R^d$, $i = 1, \cdots, M(\ep)$, $d=2,3$ be a family of compact {connected} sets   satisfying 
 \begin{equation}\label{M-ep} 
 M(\ep)\sim -{\alpha}\log\ep \mbox{ for some } \alpha\in (0,5/7),
 \end{equation}
\begin{align}\label{m17}
		D_\ep \equiv \max_{i=1, \cdots, N} {\rm diam}[\mc{S}^i_\ep ] \to 0 \ \mbox{as}\ \ep &\to 0, \br
	0 < \lambda D_\ep^\beta \leq |\mathcal{S}^i_\ep |  \ \mbox{as}\ \ep &\to 0,\ 
d {\leq } \beta <  \left\{ \begin{array}{l}  15-21\alpha \ \mbox{if}\  d = 3, \\ \\ 
	\mbox{arbitrary finite if}\ d = 2, \end{array} \right.		
\end{align}		
for some $\lambda > 0$ independent of $\ep$, $i=1, \cdots, N.$ Suppose the rigid body densities 
$\vr^\ep_{\mc{S}^i}$, $i =1, \cdots, N$
are constant,
\begin{equation} \label{m18}
	0 < \rho^{\ep}_{\mc{S}^i} \leq \Ov{\rho} \ \mbox{uniformly for}\ \ep \to 0, \ i = 1, \cdots N	
\end{equation}	
for some $\Ov{\vr} > 0$ independent of $\ep$.

	Let $ \left[ \rho^{\ep}, \bu^{\ep}, \left(\vc{h}_1^{\ep}, \cdots , \vc{h}_{M(\ep)}^{\ep}\right) , \left( \mathbb{O}_1^{\ep}, \cdots ,\mathbb{O}_{M(\ep)}^{\ep} \right) \right]_{\ep > 0}$ be the associated sequence of weak solutions to the fluid--structure interaction problem {specified in Section \ref{ws}}, with the initial data $(\rho^{\ep}_0, \bu^{\ep}_0)_{\ep > 0}$ satisfying 
	\begin{align} 
	\vr^{\ep}_0(x) = \left\{ \begin{array}{l} \vr^{\ep}_{\mc{S}^i} \ \mbox{if}	\ x \in \mathcal{S}^i_\ep (0),\ 
		i  = 1, \cdots, M(\ep), \\ \\
	1 \ \mbox{otherwise}, 
\end{array} \right. \label{m22} \\ 		
		\bu^{\ep}_0 \in L^2(\R^d; \R^d),\ \rho^{\ep}_0 \bu^{\ep}_0 \to 
	{\bu_0 \ \mbox{in}\ L^2(\R^d; \R^d)},  \label{m23}
		\end{align}
where 
\begin{equation} \label{m24}
	\bu_0 \in {L^2(\Omega; \R^d)},\ \Div \bu_0 = 0.
	\end{equation}
{Finally, suppose}
\begin{equation}\label{m24a} 
	\vc{g} = \vc{g}(x),\ \vc{g} \in L^2 \cap L^\infty(\R^d; \R^d). 
	\end{equation}	

Then, up to a suitable subsequence,  
\begin{align} 
	\bu^{\ep} \to \bu \ &\mbox{weakly in}\ L^2(0,T; W^{1,2}_0 (\Omega; \R^d)), \br
	&\mbox{and in}\ L^2((0,T) \times \Omega; \R^d), \label{m25} 
\end{align}
where $\bu$ is a weak solution of the Navier--Stokes system \eqref{m1}--\eqref{m3}, with the initial data 
$\bu_0$ satisfying the energy inequality 
\begin{equation} \label{m26}
	\intO{ |\bu|^2 (\tau, \cdot) } + \int_0^\tau \intO{ {\mathbb{S} (\Ds \bu) : \Ds \bu }} \leq
	\intO{  |\bu_0|^2  } + \int_0^\tau \intO{  \vc{g} \cdot \bu } \dt
\end{equation} 
for a.a. $\tau \in (0,T)$.
	\end{Theorem}

%\end{mdframed}

%\label{r}
\begin{Remark} We can also tackle the case of finitely many bodies with a straightforward modification of the technique used in the previous proposition.
Let $\mathcal{S}^i_\ep \subset \R^d$, $i = 1, \cdots, N$, $d=2,3$ be a family of compact {connected} sets  satisfying 
\begin{align}\label{m17-finite}
		D_\ep \equiv \max_{i=1, \cdots, N} {\rm diam}[\mc{S}^i_\ep ] \to 0 \ \mbox{as}\ \ep &\to 0, \br
	0 < \lambda D_\ep^\beta \leq |\mathcal{S}^i_\ep |  \ \mbox{as}\ \ep &\to 0,\ 
d {\leq } \beta <  \left\{ \begin{array}{l}  15 \ \mbox{if}\  d = 3, \\ \\ 
	\mbox{arbitary finite if}\ d = 2, \end{array} \right.		
\end{align}		
for some $\lambda > 0$ independent of $\ep$, $i=1, \cdots, N.$ Suppose the rigid body densities 
$\vr^\ep_{\mc{S}^i}$, $i =1, \cdots, N$
are constant.

	Let $ \left[ \rho^{\ep}, \bu^{\ep}, (\vc{h}_i^{\ep}, \cdots , \vc{h}_N^{\ep}) , ( \mathbb{O}_1^{\ep}, \cdots ,
	\mathbb{O}_N^{\ep} ) \right]_{\ep > 0}$ be the associated sequence of weak solutions to the fluid--structure interaction problem {specified in Section \ref{ws}}, with the initial data $(\rho^{\ep}_0, \bu^{\ep}_0)_{\ep > 0}$ satisfying 
\eqref{m22}--\eqref{m23}.	
Then, we have the same conclusion for the limiting system as in \eqref{m25}--\eqref{m26}.
\end{Remark}
\begin{Remark}
In fact, hypothesis \eqref{m18} requiring uniform bounds on the body density is not restrictive. Indeed, if 
$\rho^{\ep}_{\mc{S}}$ contains an unbounded sequence, then the relevant results have been already obtained by He and Iftimie \cite{HeIft1}, \cite{HeIft2}. The main novely of the present result is allowing the body density to be asymptotically small.
\end{Remark}

\begin{Remark}
Without loss of generality, we set $D_\ep = \ep$. In accordance with hypothesis \eqref{m17}, 
there are balls $B_{r \ep}[\vc{y}_i^{\ep}]$ centred at $\vc{y}_i^{\ep} \in \R^d$ and of radius $r \ep$ such that 
\[
\mathcal{S}^i_\ep \subset B_{r \ep}[\vc{y}_i^{\ep}]. 
\]  
In addition, choosing $r > 0$ large enough, we may suppose 
\[
\vc{y}_i^{\ep} = \frac{1}{|\mathcal{S}^i_\ep |} \int_{\mathcal{S}^i_\ep} x \ \dx 
\]
coincided with the barycenter of the body $\mathcal{S}^i_\ep$. Accordingly, we may infer 
\begin{equation} \label{r1} 
\mathcal{S}^i_\ep (t) \subset B_{r \ep}[\vc{h}^{\ep}_i(t)],\ t \in [0,T], \ i=1,\cdots, N,
\end{equation}
where $(\vc{h}^{\ep}_i )_{\ep > 0}$ are the rigid translation of the barycenters of the body. Finally, again without loss of generality, we suppose $r=1$ therefore $\mathcal{S}^i_\ep (t) \subset B_{\ep}[\vc{h}^{\ep}_i(t)],\ t \in [0,T]$,\ $i = 1,\cdots, N$.
\end{Remark}
%\begin{Remark}
It follows from \eqref{r1} (with $r=1$) and the weak formulation of the momentum equation \eqref{weak:mom} that 
the integral identity
\begin{align}
	\int_0^T &\intRd{ \Big[ \vre \bu^\ep \cdot \partial_t \bfphi + \bu^\ep \otimes \bu^\ep : \Ds \bfphi \Big] } \dt \br &= 
	\int_0^T \intRd{ \Big[ {\mathbb{S}(\Ds \bu^\ep ) } : \Ds \bfphi - \vre \vc{g} \cdot \bfphi \Big] } \dt - 
	\intRd{ {\rho^\ep_0 \bu^\ep_0} \cdot \bfphi }	\label{r2}
\end{align}
holds for any function $\bfphi \in C^1_{c} ([0,T) \times \Omega; \R^d)$, $\Div \bfphi = 0$ 
satisfying 
\begin{equation} \label{r3}
	\Grad \bfphi (t, \cdot) = 0 \ \mbox{on}\ {B_{ \ep}}[\vc{h}^{\ep}_i(t)] 
	\ \mbox{for any}\ t \in [0,T),\ i = 1, \cdots, N.
\end{equation}
Obviously, the test functions satisfying \eqref{r3} are constant on the shifted balls containing the rigid bodies.
It is worth noting that the class of test functions \eqref{r3} is much larger than its counterpart for the 
obstacle problem, where the test functions are supposed to vanish on the obstacle.
	
%\end{Remark}
\section{Uniform bounds}
\label{u} 

In this section, we {derive suitable uniform bounds necessary for passing} to the limit in the weak formulation of the momentum equation \eqref{weak:mom}. All bounds used in the limit passage follow from the energy inequality \eqref{eq2}. 

Let $\left[\rho^{\ep}, \bu^{\ep}, (\vc{h}^{\ep}_1, \cdots, \vc{h}^\ep_N), (\mathbb{O}^{\ep}_1, \cdots ,
\mathbb{O}^{\ep}_N \right]_{\ep > 0}$ be the associated sequence of weak solutions to the fluid--structure interaction problem satisfying \eqref{m6}--\eqref{eq2}. 
{As $\vc{g}$ satisfies the hypothesis \eqref{m24a}, we have} 
\[
\intRd{ \rho^\ep \vc{g} \cdot \bu^\ep } = \sum_{i =1}^N \int_{\mathcal{S}^i_{\ep}(t) } \vr^\ep_{\mathcal{S}^i} \vc{g} \cdot \bu^\ep \dx 
+ \int_{\R^d \setminus \cup_{i=1}^N \mathcal{S}_{\ep}(t)} \vc{g} \cdot \bu^\ep
\] 
{where, in accordance with the hypothesis \eqref{m18}, \eqref{m24a}:}
\[
 \int_{\mathcal{S}^i_{\ep}(t) } \vr^\ep_{\mathcal{S}^i} \vc{g} \cdot \bu^\ep \dx \aleq  \int_{\mathcal{S}^i_{\ep}(t) } \vr^\ep_\mathcal{S} \ \dx + \intRd{ \vr^\ep |\bu^\ep|^2 } \aleq 1 + \intRd{ \vr^\ep |\bu^\ep|^2 },\ i=1, \cdots, N.
\]
{Here and hereafter, the symbol $a \aleq b$ means $a \leq c b$, where $c$ is a generic constant independent of the 
scaling parameter $\ep$. Similarly, by virtue of hypothesis \eqref{m24a} and Cauchy--Schwartz inequality, }
\[
\int_{\R^d \setminus \cup_{i=1}^N\mathcal{S}^i_{\ep}(t)} \vc{g} \cdot \bu^\ep \leq c(\delta) + \delta \intRd{ |\bu^\ep|^2 }  
\]
{for any $\delta > 0$. In addition, using Korn--Poincar\' e inequality, we get}
\begin{equation} \label{u1a}
\intRd{ |\bu^\ep|^2 } \aleq \intRd{ \mathbb{S} (\Ds \bu^\ep) : \Ds \bu^\ep } + 
\intRd{ \rho^\ep |\bu^\ep |^2 }.
\end{equation}
{Finally, we apply Gronwall's argument to the energy inequality \eqref{eq2} and deduce 
	the following bounds}
\begin{equation} \label{u1}  
	{\rm ess} \sup_{t \in (0,T)} \intRd{ \rho^{\ep}|\bu^{\ep}|^2 (t, \cdot) } \aleq 1,\ 
	\int_0^T \intRd{ \mathbb{S} (\Ds \bu^\ep) : \Ds \bu^\ep } \aleq 1, 
\end{equation}
{which, together with \eqref{u1a}, yields}
\begin{equation} \label{u2}	
	\int_0^T \intRd{ |\bu^{\ep}|^2 + |\Grad \bu^{\ep}|^2 } \dt \aleq 1. 
\end{equation}

\subsection{Bounds on the rigid velocity $d=2$}

In view of \eqref{m11}, $\bu^\ep(t, \cdot) = \bu^{\ep}_{\mc{S}^i}(t, \cdot)$ a.a. on $\mathcal{S}^i_\ep(t)$ for a.a. $t \in (0,T)$, $i=1,\cdots, N$. {As the rigid body densities are constant}, the translational and rotational velocities are orthogonal on 
{$\mathcal{S}^i_\ep (t)$}, specifically 
\[
\int_{{\mathcal{S}^i_\ep (t)}} \vc{Y}^{\ep}_i (t) \cdot \mathbb{Q}^{\ep}_i ( \cdot - \vc{h}^{\ep}_i (t))  \ \dx = 0.
\]
Consequently,  we have
\begin{equation} \label{u3}
\int_{\mathcal{S}^i_\ep (t) } |\vue|^2 \dx	=
\int_{\mathcal{S}^i_\ep (t) } |\bu^{\ep}_{\mc{S}^{i}}|^2 \dx = 
\int_{\mathcal{S}^i_\ep (t) } |\vc{Y}^{\ep}_i (t) + \mathbb{Q}^{\ep}_i ( \cdot - \vc{h}^{\ep}_i (t)) |^2 \geq  
\int_{\mathcal{S}^i_\ep (t) } |\vc{Y}^{\ep}_i (t)|^2 \ \dx
\end{equation} 

{If $d=2$}, the standard Sobolev embedding relation yields 
\[
\| \vue \|_{L^q (\mathcal{S}^i_\ep (t); \R^2 )} \leq c(q) \| \vue \|_{W^{1,2}(\R^2; \R^2)} 
\ \mbox{for any finite}\ 1 \leq q < \infty,
\]
where the constant $c(q)$ is \emph{independent} of $\ep$, $i=1,\cdots, N$.

Next, by \eqref{u3} and interpolation, 
\[
| \mathcal{S}^i_\ep |^{\frac{1}{2}} | \vc{Y}^{\ep}_i | \leq 
\| \vue \|_{L^2 (\mathcal{S}^i_\ep (t); \R^2 )} \leq  \| \vue \|_{L^q (\mathcal{S}^i_\ep (t); \R^2 )}
| \mathcal{S}^i_\ep |^{\frac{1}{2} - \frac{1}{q}}
\leq c(q) \| \vue \|_{W^{1,2}(\R^2; \R^2)} | \mathcal{S}^i_\ep |^{\frac{1}{2} - \frac{1}{q}}.
\]

Consequently, in view of the bound \eqref{u2}, we may infer 
\begin{equation} \label{u52}
\| \vc{Y}^\ep_i \|_{L^2(0,T)} \aleq |\mathcal{S}^i_\ep |^{- \frac{1}{q}} \ \mbox{for any finite}\ 
1 \leq q < \infty, \ i = 1,\cdots, N, \ d = 2.	
	\end{equation}

\subsection{Bounds on the rigid velocity $d=3$}

Now, we repeat the arguments of the previous section with $q = 6$ obtaining 
\begin{equation} \label{u53}
	\| \vc{Y}^\ep_i \|_{L^2(0,T)} \aleq |\mathcal{S}^i_\ep |^{- \frac{1}{6}},\ i = 1,\cdots, N, \ d = 3.
\end{equation}

%Comparing \eqref{u52}, \eqref{u53} with the hypothesis \eqref{m17}, we get 
%\begin{equation} \label{u5}
%	\| \vc{Y}^\ep \|_{L^2(0,T)} \aleq \ep^{- \frac{d}{2}} .	
%\end{equation}

\section{Restriction operator}\label{sec:test}

{A suitable choice of the restriction operator is crucial in our analysis. In contrast to the 
	overwhelming amount of the available literature, where the test functions are modified to vanish on 
	the body, we take advantage of the freedom allowed by  \eqref{r2}, \eqref{r3} and 
{replace the function on a ball of radius $\varepsilon$ by its integral average over that ball}. The same idea has already been exploited in \cite{FRZ2}, where detailed proofs of the statements collected below are available}. 

Consider a function \begin{align} 
H &\in C^\infty(\mathbb{R}), \ 0 \leq H(Z) \leq 1,\ H'(Z) = H'(1- Z) \ \mbox{for all}\ Z \in \mathbb{R}, \br
H(Z) &= 0 \ \mbox{for} \ - \infty < Z \leq \frac{1}{4},\ 
H(Z) = 1 \ \mbox{for}\ \frac{3}{4} \leq Z < \infty
\nonumber
\end{align}

{For $\varphi \in {L^1_{\rm loc}(\R^d)}$ , $r > 0$ we define  $E_r$,}
\begin{align}\label{def:E}
E_r [\varphi] (x) = 
\frac{1}{|B_r  |} \int_{B_r} \varphi \ dz \ H \left( 2 - \frac{|x|}{r} \right)
+ \varphi(x) H \left( \frac{ | x |}{r} - 1 \right).
\end{align}
{where $B_r $ denotes the ball centred at zero with the radius $r > 0$.} Obviously, the operator 
$E_r$ maps the space $\DC(\R^d)$ into itself, and, moreover 
\begin{align} 
	\| E_r [\varphi] \|_{L^p(\R^d)} \aleq \| \varphi \|_{L^p(\R^d)}, 
	\label{T3} \\ 
\| \Grad E_r [\varphi] \|_{L^p(\R^d; \R^d)} \aleq \| \Grad \varphi \|_{L^p(\R^d; \R^d)}
\ \mbox{for any}\ 1 \leq p \leq \infty,	
\label{T4}
	\end{align}
uniformly for $0 < r \leq 1$, see \cite[Section 4.1]{FRZ2}.

\subsection{{Restriction operator in the class of solenoidal functions}}

\label{RO}

{The operator $E_r$ does not preserve solenoidality if applied componentwise to a solenoidal function. To fix this problem, we introduce 
the operator}
\begin{equation} \label{E6}
	{\vc{R}_r} [\bphi ] = E_r [\bphi] - \mathcal{B}_{2 r, r} \Big[ 
	\Div E_r [\bphi]|_{B_{2r} \setminus {B}_{r}} \Big],
	\end{equation}
where $\mathcal{B}_{2 r,  r}$ is a suitable branch of the inverse of the divergence operator defined on the 
annulus $B_{2r} \setminus B_{r}$. A possible construction of $\mathcal{B}$ was proposed by Bogovskii 
\cite{BOG} and later elaborated by Galdi \cite{MR2808162} followed by 
Diening et al. \cite{DieRuzSch} , Gei{\ss}ert \cite{GEHEHI} et al. among others. In our setting, the operator 
$\mathcal{B}_{2r,r}$ can be constructed by a simple scaling argument:

\begin{itemize}

\item 

We start with the annulus 
\[
O = B_2 \setminus B_1 = \left\{ y \in \R^d \ \Big|\ 1 < |y| < 2 \right\}.
\]
Following Galdi \cite[Chapter III, Section III.3]{MR2808162}, Diening et al. \cite{DieRuzSch}, 
we construct a linear operator $\mathcal{B}_O$ defined 
	{\it a priori} on smooth functions $g \in \DC (O)$, $\int_O g \ \D y= 0$, enjoying the following properties:

\item 
\[
\mathcal{B}_O [g] \in \DC(O; \R^d),\ {\rm div}_y \mathcal{B}_O [g] = g ;
\]
\item 
\begin{equation} \label{E6a}
\| \nabla_y \mathcal{B}_O [g] \|_{L^p(O; \R^{d \times d})} \leq c_I(p, O) \| g \|_{L^p(O)}
\ \mbox{for any}\ 1 < p < \infty.
\end{equation}
Thanks to this property, $\mathcal{B}_O [g]$ can be extended as a bounded linear operator on the space 
\[
L^p_0 (O) = \left\{ g \in L^p(O)\ \Big|\ \int_O g \ \D y = 0 \right\} 
\]
ranging in $W^{1,p}_0(O, \R^d)$.

\item If, in addition, the function $g$ can be written as $g = {\rm div}_y \vc{f}$, where 
$\vc{f} \in L^q(O; \R^d)$ satisfies 
\begin{equation} \label{norm}
	\vc{f} \cdot \vc{n}|_{\partial O} = 0,
	\end{equation}
then 
\begin{equation} \label{E6bb}
\| \mathcal{B}_O [ {\rm div}_y \vc{f}] \|_{L^q(O; \R^{d})} \leq c_{II}(q, O) \| \vc{f} \|_{L^q(O; \R^d)},\ 
1 < q < \infty.
\end{equation}

\item If  $g \in W^{l,p}_0$, then
\[
\| \mathcal{B}_O [g] \|_{W^{l+1,p}_0(O, \R^d)} \leq c(l,p,O) \| g \|_{W^{l,p}_0(O; \R^d)},\ l = 0,1,\cdots, \ 1 < p < \infty,
\]
see Galdi \cite[Theorem III.3.3]{MR2808162}.

\item An analogue of $\mathcal{B}_O$ on the domain $B_{2r} \setminus B_r$ can be now defined via scaling. 
To a given function $g$ defined on $B_{2r} \setminus B_r$ and satisfying $\int_{B_{2r} \setminus B_r} g \dx = 0$, 
we associate a function $\widetilde{g}$ on $O$, 
\[
\widetilde{g}(y) = g (ry),\ y \in O.
\]
We set 
\[
\mathcal{B}_{2r,r}[g] (x) = r \mathcal{B}_O [\widetilde{g}] \left( \frac{x}{r} \right),\ x \in B_{2r} \setminus 
B_r.
\] 
Seeing that 
\[
\Div \mathcal{B}_{2r,r}[g] (x) = {\rm div}_y \mathcal{B}_O [\widetilde{g}] \left( \frac{x}{r} \right), 
\Grad \mathcal{B}_{2r,r}[g] (x) = \nabla_y \mathcal{B}_O [\widetilde{g}] \left( \frac{x}{r} \right), 
\]
and
\[
\mathcal{B}_{2r,r}[\Div \vc{f}] (x) = \mathcal{B}_O [ {\rm div}_y \widetilde{\vc{f}}] \left( \frac{x}{r} \right)
\]
we easily observe that $\mathcal{B}_{2r,r}$ shares all properties of $\mathcal{B}_O$ on the domain 
$B_{2r} \setminus B_r$. Moreover, the bounds \eqref{E6a}, \eqref{E6bb} 
are satisfied with the same
with the same constants $c_I(p,O)$, $c_{II}(p,O)$:  
\begin{align}
	\| \Grad \mathcal{B}_{2 r, r} [g] \|_{L^p( {B_{2 r} \setminus B_r }  ; \R^{d \times d})} \leq c_I(p) \| g \|_{L^p({B_{2 r} \setminus B_r })}
	\ &\mbox{whenever}\ \int_{B_{2 r} \setminus B_r } g \dx = 0 \br 
	 &\mbox{for any}\ 1 < p < \infty,  \label{E6b}
\end{align}
and 
\begin{align} 
\| \mathcal{B}_{2 r, r} [\Div \vc{f}] \|_{L^q( B_{2 r} \setminus B_r; \R^{d})} &\leq c_{II}(q) \| \vc{f} \|_{L^q(B_{2 r} \setminus B_r; \R^d)},\br 
\mbox{whenever}\ \vc{f} \cdot \vc{n}|_{\partial (B_{2 r \setminus B_r}} = 0 \ \mbox{for any}\ 
1 < q < \infty.	
\label{E6cc}	
	\end{align}
with the constants $c_I$, $c_{II}$ independent of $r > 0$.

	\end{itemize}

  Recalling the definition of the operator $E_r$ as in \eqref{def:E}, we have 
\begin{equation} \label{E100}
E_{r}[\bfphi] = \mbox{constant vector in} \ B_{\frac{5}{4}r} \setminus B_r ,\ 
E_{r}[\bfphi] = \bfphi \ \mbox{in}\ B_{2 r} \setminus B_{\frac{7}{4} r}.
\end{equation}
In particular, 
\[
\int_{B_{2r} \setminus B_r } \Div E_r[\bfphi] \dx = 0 
\ \mbox{whenever}\ \Div \bfphi = 0.
\]
Therefore, going back to formula \eqref{E6}, $\vc{R}_r$ is well--defined for solenoidal functions. Moreover, obviously, 
\[
\Div \vc{R}_r [\bfphi ] = \Div \bfphi.
\]
Using the uniform bound \eqref{E6b} together with \eqref{T4} we deduce 
\begin{equation} \label{E101}
\left\| \Grad \vc{R}_r [\bfphi ] \right\|_{L^p(\R^d, \R^{d \times d})} \aleq \left\| \Grad \bfphi  \right\|_{L^p(\R^d, \R^{d \times d})},\ 1 < p < \infty
	\end{equation}
uniformly for $0 < r \leq 1$.

Finally, we claim the $L^q-$bound 
\begin{equation} \label{E102}
	\left\|  \vc{R}_r [\bfphi ] \right\|_{L^q(\R^d, \R^{d})} \aleq \left\| \bfphi  \right\|_{L^q(\R^d; \R^d)},\ 1 < q < \infty
\end{equation}
uniformly for $0 < r \leq 1$.
Note that this \emph{is not} a direct consequence of the ``negative'' estimates stated in \eqref{E6cc} as 
$E_r [\bfphi] \cdot \vc{n}$ may not vanish on $\partial (B_{2 r} \setminus B_r)$. To see \eqref{E102}, we first 
construct a family of cut--off functions 
\[
\psi_r \in \DC (B_{2r} \setminus B_r),\ 
0 \leq \psi_r \leq 1,\ \psi_r(x) = 1 \ \mbox{for all}\  \frac{5}{4} r \leq |x| \leq \frac{7}{4} r,\
|\Grad \psi_r | \aleq \frac{1}{r}.
\]
In accordance with \eqref{E100}, we have 
\[
\Div E_r [\bfphi] = \psi_r \Div E_r [\bfphi] 
\ \mbox{provided}\ \Div \bfphi = 0.
\]
Accordingly, 
\[
\mathcal{B}_{2 r, r} [\Div E_r [\bfphi]] = 
\mathcal{B}_{2 r, r} [\psi_r \Div E_r [\bfphi]] = 
\mathcal{B}_{2 r, r} [\Div (\psi_r E_r [\bfphi])] - \mathcal{B}_{2 r, r}[ \Grad \psi_r \cdot 
E_r [\bfphi]].
\]
On the one hand, as $\psi_r \in \DC (B_{2r} \setminus B_r)$, we are allowed to apply the negative bound \eqref{E6cc} to obtain 
\begin{equation} \label{E103}
\left\| \mathcal{B}_{2 r, r} [\Div (\psi_r E_r [\bfphi])] \right\|_{L^q(\R^d; \R^d)} 
\aleq \left\| \psi_r E_r [\bfphi] \right\|_{L^q(\R^d; \R^d)} \aleq 	\left\| \bfphi \right\|_{L^q(\R^d; \R^d)}.
	\end{equation}
On the other hand, the gradient bounds \eqref{E6b} yield 
\begin{equation} \label{E104}
\left\| \Grad \mathcal{B}_{2 r, r}[ \Grad \psi_r \cdot 
E_r [\bfphi]]	\right\|_{L^q (\R^d; \R^{d \times d})} \aleq 
\left\| \Grad \psi_r \cdot 
E_r [\bfphi]]	\right\|_{L^q (\R^d)} \aleq \frac{1}{r} \left\| \bfphi \right\|_{L^q(\R^d; \R^d)}.
	\end{equation}
Next, by virtue of Poincar\' e inequality, 
\begin{equation} \label{E105}
\left\| \mathcal{B}_{2 r, r}[ \Grad \psi_r \cdot 
E_r [\bfphi]]	\right\|_{L^q (\R^d; \R^{d})} \aleq r \left\| \Grad \mathcal{B}_{2 r, r}[ \Grad \psi_r \cdot 
E_\ep [\bfphi]]	\right\|_{L^q (\R^d; \R^{d \times d})}.
\end{equation}
Combining the estimates \eqref{E103}--\eqref{E105} we obtain 
\[
\left\| \mathcal{B}_{2 r, r} [\Div E_r [\bfphi]] \right\|_{L^q(\R^d; \R^d)} \aleq \left\| \bfphi \right\|_{L^q(\R^d; \R^d)},
\]
which, together with \eqref{T3} yields the desired conclusion \eqref{E102}.

\subsection{Space shift}
	
{For $\vc{h} \in \R^d$,  we set}	 
\begin{equation} \label{E10}
	\vc{R}_r (\vc{h} )[\bphi] = S_{- \vc{h}} \vc{R}_r \Big[ S_{\vc{h}} [\bphi] \Big],
	\end{equation}
where $S_{\vc{h}}$ is the shift operator given by 
\[
S_{\vc{h}} [f] (x) = f ( \vc{h} + x ).
\]

{The basic properties of the operator $\vc{R}_r(\vc{h})$ are summarized below, 
	cf. also \cite[Proposition 5.1] {FRZ2}. }

\begin{Proposition} \label{EP1}
	
	The operator $\vc{R}_r (\vc{h})$ is well defined for any function $\bfphi$ in the class  
	\[
	\bfphi  \in \DC(\R^d; \R^d), \ \Div \bfphi = 0
	\]
and can be uniquely extended to functions 
\[
\bfphi \in L^p_{\rm loc}(\R^d; \R^d),\ \Div \bfphi = 0 \ a.a.
\]	

Moreover, the following holds:

\begin{itemize}
	
	\item
	
\begin{equation} \label{reg1}
	\bfphi \in C^\infty (\R^d; \R^d) \ \Rightarrow \ \vc{R}_r (\vc{h})[\bfphi] \in C^\infty(\R^d; \R^d);  
\end{equation}	

\item 
\begin{equation} \label{E13}
	\Div \vc{R}_r (\vc{h})[\bphi] = \Div \bphi = 0 ;
\end{equation}	
	 
	\item 
\begin{equation} \label{E12}
\vc{R}_r (\vc{h})[\bphi] = \left\{ \begin{array}{l} \frac{1}{|{B}_{r}(\vc{h})|} 
\int_{{B}_{r}(\vc{h})} \bphi \dx \ \mbox{if}\ |x - \vc{h}| < r,  \\ 
\\ 
\bphi (x) \ \mbox{if}\ |x - \vc{h}| > 2 r;  \end{array}	\right. 
	\end{equation} 

\item 

\begin{equation} \label{E12a}
\vc{R}_r (\vc{h})	[\bfphi] = \bfphi \ \mbox{whenever}\ 
\bfphi \ \mbox{is a constant vector on}\ B_{2r}(\vc{h});
	\end{equation}

\item

\begin{equation} \label{E14bis}
	\left\| \vc{R}_r (\vc{h})[\bphi] \right\|_{L^p(\R^d; \R^{d})} \aleq 
	\| \bphi\|_{L^{p}(\R^d; \R^{d})}
\end{equation}
\begin{equation} \label{E14}
\left\| \Grad \vc{R}_r (\vc{h})[\bphi] \right\|_{L^p(\R^d; \R^{d \times d})} \aleq 
\| \Grad \bphi\|_{L^{p}(\R^d; \R^{d\times d})}
\end{equation} 
for any $1 < p < \infty$ independently of $0 < r \leq 1$;
\item  If $\bphi$ is compactly supported, then so is {$\vc{R}_r (\vc{h})[\bphi]$}. 
Specifically, 
 \begin{equation} \label{E15}
{\rm supp}[ \vc{R}_r (\vc{h})[\bphi] ] \subset \Ov{ \mathcal{U}_{2r}[ {\rm supp}[\bfphi ]] }
\end{equation}
where $\mathcal{U}_{2r}(O)$ denotes the $2r-$neighbourhood of a set $O$.
\end{itemize}
\end{Proposition}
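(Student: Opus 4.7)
\textbf{Proof plan for Proposition \ref{EP1}.} The strategy is to reduce every claim to the non-shifted case $\vc{h}=0$ and then read off each property from the structural decomposition $\vc{R}_r[\bphi] = E_r[\bphi] - \mathcal{B}_{2r,r}[\Div E_r[\bphi]]$ using the properties of $E_r$ already collected in \eqref{T3}--\eqref{T4} and \eqref{E100}--\eqref{E105}. Since $S_{\vc{h}}$ is a translation, it commutes with $\Grad$ and $\Div$, preserves $L^p$ norms, maps $C^\infty$ to $C^\infty$, and translates supports; consequently any statement about $\vc{R}_r(\vc{h})$ follows from the corresponding statement about $\vc{R}_r$ after relabelling balls of radius $r, 2r$ centred at the origin as balls centred at $\vc{h}$.

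The key structural observation, based on \eqref{E100}, is that $E_r[\bphi]$ equals the constant average $\frac{1}{|B_r|}\int_{B_r}\bphi\,\dx$ on the ball $B_{5r/4}$ and coincides with $\bphi$ outside $B_{7r/4}$, so that $\Div E_r[\bphi]$ is supported in the annulus $B_{7r/4}\setminus B_{5r/4}\Subset B_{2r}\setminus B_r$ as soon as $\Div \bphi =0$. Moreover, by the divergence theorem applied to the ball $B_{2r}$ and $E_r[\bphi]=\bphi$ on $\partial B_{2r}$, we have $\int_{B_{2r}\setminus B_r}\Div E_r[\bphi]\,\dx = 0$, so the compatibility condition needed to apply $\mathcal{B}_{2r,r}$ is satisfied. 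For $\bphi\in L^p_{\mathrm{loc}}$ with $\Div\bphi=0$, the same computation shows that $\Div E_r[\bphi]$ is a well-defined $L^p$ function supported in the annulus (the distributional divergence picks up only the derivative of the cut-off $H(|x|/r-1)$ hitting $\bphi$), which yields the unique extension to $L^p_{\mathrm{loc}}$ divergence-free fields.

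With this in hand, the listed properties follow almost by inspection. For \eqref{reg1}, $E_r$ preserves $C^\infty$ by construction and $\mathcal{B}_O$ preserves smoothness (Galdi \cite[Thm.~III.3.3]{MR2808162}), hence so does its rescaling $\mathcal{B}_{2r,r}$. For \eqref{E13}, $\Div\mathcal{B}_{2r,r}[\Div E_r[\bphi]]=\Div E_r[\bphi]$ by construction, so the two contributions cancel; and since $\mathcal{B}_{2r,r}[g]$ is extended by zero outside $B_{2r}\setminus B_r$, the formula \eqref{E12} is immediate from the above pointwise description of $E_r[\bphi]$. For \eqref{E12a}, if $\bphi\equiv c$ on $B_{2r}$ then the average equals $c$ and the symmetry relation $H'(Z)=H'(1-Z)$ in the defining property of $H$ forces $H(Z)+H(1-Z)\equiv 1$ for $Z\in[0,1]$ (the derivative vanishes and the value at $Z=0$ is $1$), which makes $E_r[\bphi]\equiv c$ on all of $B_{2r}$; hence $\Div E_r[\bphi]=0$ and the Bogovskii correction vanishes. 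The uniform bounds \eqref{E14bis}--\eqref{E14} are precisely the conclusions \eqref{E101}--\eqref{E102} combined with translation invariance. Finally, for \eqref{E15}, outside $B_{2r}(\vc{h})$ one has $\vc{R}_r(\vc{h})[\bphi]=\bphi$, while inside $B_{2r}(\vc{h})$ the operator vanishes whenever $\bphi$ vanishes on $B_{2r}(\vc{h})$ (the average is zero and $E_r$ returns zero, so the Bogovskii term is zero as well), which places the support inside the $2r$-neighbourhood of $\supp[\bphi]$.

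The main technical point, and the only step not essentially book-keeping, is the $L^q$ bound \eqref{E14bis}: the Bogovskii operator on an annulus only yields a bound on $\mathcal{B}_{2r,r}[\Div\vc{f}]$ in $L^q$ when $\vc{f}\cdot\vc{n}$ vanishes on the boundary, which fails for $\vc{f}=E_r[\bphi]$. This obstruction was already handled in the preceding subsection by inserting a cut-off $\psi_r\in\DC(B_{2r}\setminus B_r)$ equal to $1$ on the support of $\Div E_r[\bphi]$ and trading a factor $1/r$ coming from $|\Grad\psi_r|$ against a factor $r$ coming from Poincaré's inequality, so one simply invokes the bound \eqref{E102} that was established there. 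All other properties are formal consequences of the decomposition.
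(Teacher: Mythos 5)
Your proof follows the same route the paper itself indicates: reduce to $\vc{h}=0$ by translation invariance of $S_{\vc{h}}$, and read every listed property off the decomposition $\vc{R}_r=E_r-\mathcal{B}_{2r,r}\bigl[\Div E_r\bigr]$ together with the explicit description \eqref{E100} of $E_r$ and the bounds \eqref{E101}, \eqref{E102} for the Bogovskii correction established in Section \ref{RO}; all the steps are sound. One small caveat is \eqref{E15}: your observation that $\vc{R}_r(\vc{h})[\bphi]$ vanishes on $B_{2r}(\vc{h})$ whenever $\bphi$ does only yields $\supp\bigl[\vc{R}_r(\vc{h})[\bphi]\bigr]\subset\supp[\bphi]\cup B_{2r}(\vc{h})$, which in the worst case is a $4r$-neighbourhood of $\supp[\bphi]$ rather than $2r$ (a point of $B_{2r}(\vc{h})$ can be at distance up to $4r$ from the nearest point of $\supp[\bphi]\cap B_{2r}(\vc{h})$); this factor is harmless for what follows but is not quite the stated constant.
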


Finally, we evaluate the differential of $\nabla_{\vc{h}} \vc{R}_r (\vc{h})[\bfphi]$ for 
	a given function $\bfphi$. It follows from \eqref{reg1} that 
$\vc{R}_h (\vc{h})[\bfphi]$	is a smooth function of $x$ as long as $\bfphi$ is smooth and 
the differentiation can be performed in a direct manner. Consequently, we obtain 	
\begin{equation} \label{formula}
\nabla_{\vc{h}} \vc{R_r}(\vc{h}) [\bfphi ] = \Grad \left( \vc{R}_r (\vc{h}) [\bfphi] \right) - 
\vc{R}_r (\vc{h}) [ \Grad \bfphi ] .	
	\end{equation}
Note carefully that if $\bfphi$ is solenoidal, meaning $\Div\bfphi = 0$, then so is 
$\Grad \bfphi$ (component--wise) and the right--hand side of \eqref{formula} is well defined.
Using the bounds \eqref{E14bis}, \eqref{E14}, the formula \eqref{formula} can be extended to solenoidal functions 
$\bfphi \in W^{1,p}_{\rm loc}(\R^d; \R^d)$ by density argument. 

\subsection{Composition}

As we are facing the $N$-body problem, it is convenient to consider the composition of $N$ restriction operators
\begin{equation} \label{CC1}
\vc{R}_\ep (\vc{h}_1, \cdots \vc{h}_N) [\bfphi]  
= \vc{R}_\ep (\vc{h}_1 ) \circ \vc{R}_{5\ep} (\vc{h}_2) \circ \cdots \circ \vc{R}_{5^{N-1}\ep} (\vc{h}_N) [\bfphi]	
	\end{equation}
for arbitrary $(\vc{h}_1, \cdots, \vc{h}_N)$. In view of the property \eqref{E13}, the operator 
$\vc{R}_\ep (\vc{h}_1, \cdots \vc{h}_N) [\bfphi]$ is well defined for any solenoidal $\bfphi$ and 
\[
\Div \vc{R}_\ep (\vc{h}_1, \cdots \vc{h}_N) [\bfphi] = 0.
\] 

The following result is crucial. 

\begin{Lemma} \label{LB1}
	Suppose $\vc{h}_1, \cdots, \vc{h}_N$ are $N$ points in $\R^d$, $N \geq 1$. Let 
	$r_1, \cdots, r_N$ be positive numbers, 
	\begin{equation} \label{B1}
		r_{n+1} \geq 5 r_n,\ n = 1,2,\cdots, N-1.
	\end{equation} 
	
	Then 
	\begin{equation} \label{B2}
		\vc{R}_{r_1}(\vc{h}_1) \circ \cdots \circ \vc{R}_{r_N} ( \vc{h}_N ) [\bfphi] = 
		\Lambda_i \ \mbox{on}\ B_{r_1}(\vc{h}_i),\ i =1,\cdots, N, 
	\end{equation}
	where $\Lambda_i$ are constant vectors.

\end{Lemma}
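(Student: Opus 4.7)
The plan is to proceed by induction on $N$. For $N = 1$ the claim is immediate from property \eqref{E12}: $\vc{R}_{r_1}(\vc{h}_1)[\bfphi]$ equals the constant average $\frac{1}{|B_{r_1}(\vc{h}_1)|} \int_{B_{r_1}(\vc{h}_1)} \bfphi \ \D x$ throughout $B_{r_1}(\vc{h}_1)$, so this average serves as $\Lambda_1$.

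For the inductive step, assuming the conclusion for $N-1$ operators, I would first apply the inductive hypothesis to $\psi := \vc{R}_{r_2}(\vc{h}_2) \circ \cdots \circ \vc{R}_{r_N}(\vc{h}_N)[\bfphi]$; the radii $r_2 \leq \cdots \leq r_N$ still satisfy the scaling condition \eqref{B1}, with $r_2$ now playing the role of the smallest radius. This produces constant vectors $\widetilde{\Lambda}_2, \ldots, \widetilde{\Lambda}_N$ with $\psi \equiv \widetilde{\Lambda}_i$ on $B_{r_2}(\vc{h}_i)$ for $i = 2, \ldots, N$. It then suffices to check that $\vc{R}_{r_1}(\vc{h}_1)[\psi]$ is constant on each $B_{r_1}(\vc{h}_i)$ for $i = 1, \ldots, N$.

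The case $i = 1$ is again immediate from \eqref{E12}. For $i \geq 2$ I would split into two sub-cases based on the relative position of $\vc{h}_1$ and $\vc{h}_i$. If $B_{r_1}(\vc{h}_i) \cap B_{2r_1}(\vc{h}_1) = \emptyset$, then by \eqref{E12} the operator $\vc{R}_{r_1}(\vc{h}_1)$ acts as the identity on $B_{r_1}(\vc{h}_i)$, and since $B_{r_1}(\vc{h}_i) \subset B_{r_2}(\vc{h}_i)$, the value there is already the constant $\widetilde{\Lambda}_i$. Otherwise $|\vc{h}_1 - \vc{h}_i| < 3 r_1$, and the scaling condition $r_2 \geq 5 r_1$ forces
\[
B_{2r_1}(\vc{h}_1) \subset B_{5 r_1}(\vc{h}_i) \subset B_{r_2}(\vc{h}_i),
\]
so $\psi$ is the constant $\widetilde{\Lambda}_i$ throughout $B_{2 r_1}(\vc{h}_1)$. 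Property \eqref{E12a} then yields $\vc{R}_{r_1}(\vc{h}_1)[\psi] = \psi$ everywhere, and in particular $\psi \equiv \widetilde{\Lambda}_i$ on $B_{r_1}(\vc{h}_i)$.

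The substantive content lies in the intersecting sub-case, and the key observation is purely geometric: the factor $5$ in \eqref{B1} is chosen precisely so that whenever the modification zone $B_{2r_1}(\vc{h}_1)$ of the outermost operator reaches another center $\vc{h}_i$, it is entirely absorbed by the region $B_{r_2}(\vc{h}_i)$ on which the inner composition is already constant. This localization is what permits \eqref{E12a} to propagate constancy through the remaining centers without interference. I do not anticipate any further obstruction, as the analytical machinery of Proposition \ref{EP1} is already in place; only the book-keeping of the inductive case analysis needs to be written carefully.
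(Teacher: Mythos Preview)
Your proposal is correct and follows essentially the same inductive strategy as the paper's proof, invoking the same properties \eqref{E12} and \eqref{E12a} and the same geometric dichotomy based on whether $|\vc{h}_1 - \vc{h}_i| < 3r_1$. The only organizational difference is that the paper applies the induction hypothesis twice---once to $\vc{R}_{r_1}(\vc{h}_1)\circ\cdots\circ\vc{R}_{r_N}(\vc{h}_N)$ acting on $\vc{R}_{r_{N+1}}(\vc{h}_{N+1})[\bfphi]$ to dispose of indices $1,\ldots,N$ immediately, leaving only the last index to check---whereas you apply it once to the inner composition and then run the case analysis over every $i\geq 2$; the geometric content is identical.
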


\begin{proof}

The proof can be done by induction with respect to $N$. In view of \eqref{E12}, the result obviously holds for $N=1$. 

Suppose we have already shown the conclusion for $N$ points and consider   
\[
\vc{w} = \vc{R}_{r_1}(\vc{h}_1) \circ \cdot \circ \vc{R}_{r_{N}} ( \vc{h}_{N} ) \circ \vc {R}_{r_{N + 1}}( \vc{h}_{N + 1}) [\bfphi].	
\]
In view of the induction hypothesis, it is enough to show 
\begin{equation} \label{B3}
	\vc{w} = \Lambda_{N+1} \ \mbox{on}\ B_{r_1}(\vc{h}_{N+1}). 
\end{equation}	
Denote 
\[
\vc{v} = \vc{R}_{r_2}(\vc{h}_2) \circ \cdots \circ \vc{R}_{r_{N}} ( \vc{h}_{N} ) \circ \vc{R}_{r_{N + 1}} ( \vc{h}_{N + 1} ) [\bfphi]
\]
Using again the induction hypothesis we get 
\begin{equation} \label{B4}
	\vc{v} = \Lambda_{N+1} \ \mbox{on}\ B_{r_2}(\vc{h}_{N+1}).
\end{equation}

Now, consider two complementary cases 
\[
\vc{h}_{N+1} \in \R^d \setminus B_{3r_1}(\vc{h}_1) 
\ \mbox{or}\ \vc{h}_{N+1} \in B_{3r_1}(\vc{h}_1).
\]
If $\vc{h}_{N+1} \in \R^d \setminus B_{3r_1}(\vc{h}_1)$, then it follows from \eqref{E12}
\[
\vc{w} = \vc{R}_{r_1}(\vc{h}_1)[\vc{v} ] = \vc{v} \ \mbox{on}\ B_{r_1}(\vc{h}_{N+1})
\]
and the desired conclusion follows from \eqref{B4} as $r_2 > r_1$.

If $\vc{h}_{N+1} \in B_{3r_1}(\vc{h}_1)$, then it follows from \eqref{B4} and the hypothesis $r_2 \geq 5 r_1$ that 
$\vc{v}$ equals $\Lambda_{N+1}$ on $B_{2r_1}(\vc{h}_1)$. Consequently, by virtue of \eqref{E12a},  
\[
\vc{w} = \vc{R}_{r_1}(\vc{h}_1)[\vc{v}] = \vc{v} 
\]
and the desired result follows again from \eqref{B4}.

\end{proof}

Let us summarize the properties of the operator $\vc{R}_\ep (\vc{h}_1, \cdots, \vc{h}_N)$ that can be easily deduced from Proposition \ref{EP1} and Lemma \ref{LB1}.

\begin{Proposition} \label{EP1R}
	
	The operator $\vc{R}_\ep (\vc{h}_1, \cdots,\vc{h}_N)$ is well defined for any function $\bfphi$ in the class  
	\[
	\bfphi  \in \DC(\R^d; \R^d), \ \Div \bfphi = 0
	\]
	and can be uniquely extended to functions 
	\[
	\bfphi \in L^p_{\rm loc}(\R^d; \R^d),\ \Div \bfphi = 0 \ a.a.
	\]	
	
	Moreover, the following holds:
	
	\begin{itemize}
		
		\item
		
		\begin{equation} \label{reg1R}
			\bfphi \in C^\infty (\R^d; \R^d) \ \Rightarrow \ \vc{R}_\ep  (\vc{h}_1, \cdots,\vc{h}_N) \in C^\infty(\R^d; \R^d);  
		\end{equation}	
		
		\item 
		\begin{equation} \label{E13R}
			\Div \vc{R}_\ep  (\vc{h}_1, \cdots,\vc{h}_N) [\bphi] = \Div \bphi = 0 ;
		\end{equation}	
		
		\item 
		\begin{equation} \label{E12R}
			\vc{R}_\ep (\vc{h}_1, \cdots,\vc{h}_N) [\bphi] = \Lambda_i - \mbox{a constant vector on} 
		\ B_\ep (\vc{h}_i),\ i = 1,\cdots, N;	
		\end{equation}

		\item 
		
		\begin{equation} \label{E12RR}
			\vc{R}_\ep (\vc{h}_1, \cdots,\vc{h}_N) [\bphi] (x) = \bfphi (x) \ \mbox{whenever}\ 
			x \in \R^d \setminus \cup_{i=1}^N B_{2 \cdot 5^{i-1} \ep} (\vc{h}_i) ;
		\end{equation}
		
		\item
		\begin{equation} \label{E14bisR}
			\left\| \vc{R}_\ep (\vc{h}_1, \cdots,\vc{h}_N)[\bphi] \right\|_{L^p(\R^d; \R^{d})} \leq c(p)^{N}
			\| \bphi\|_{L^{p}(\R^d; \R^{d})}
		\end{equation}
		\begin{equation} \label{E14R}
			\left\| \Grad \vc{R}_\ep (\vc{h}_1, \cdots,\vc{h}_N)[\bphi] \right\|_{L^p(\R^d; \R^{d \times d})} \leq
			c(p)^{N} 
			\| \Grad \bphi\|_{L^{p}(\R^d; \R^{d\times d})}
		\end{equation} 
		for any $1 < p < \infty$ uniformly for $0 < \ep \leq 1$;
		\item  If $\bphi$ is compactly supported, then so is {$\vc{R}_\ep (\vc{h}_1, \cdots,\vc{h}_N)[\bphi]$}. 
		Specifically, 
		\begin{equation} \label{E15R}
			{\rm supp}[ \vc{R}_\ep (\vc{h}_1, \cdots, \vc{h}_N)[\bphi] ] \subset \Ov{ \mathcal{U}_{2\cdot 5^N \ep}[ {\rm supp}[\bfphi ]] }.
		\end{equation}

	\end{itemize}
\end{Proposition}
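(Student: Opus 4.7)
The overall strategy is a direct bookkeeping exercise: each listed property is obtained by unwinding the composition in \eqref{CC1} and applying either Proposition \ref{EP1} to each factor $\vc{R}_{5^{k-1}\ep}(\vc{h}_k)$ or the constancy Lemma \ref{LB1} with the specific choice of radii $r_k = 5^{k-1}\ep$, which satisfies the geometric growth hypothesis $r_{k+1} = 5 r_k$ of that lemma. With this choice, properties \eqref{reg1R}, \eqref{E13R} and the $L^p$/$W^{1,p}$ bounds \eqref{E14bisR}, \eqref{E14R} follow by iterating the single--operator bounds \eqref{reg1}, \eqref{E13}, \eqref{E14bis}, \eqref{E14} a total of $N$ times; the constant $c(p,N)$ appearing on the right--hand side arises as the $N$-fold product of the uniform constant from Proposition \ref{EP1} and is therefore $\ep$-independent.

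The constancy property \eqref{E12R} is essentially a restatement of Lemma \ref{LB1}: setting $r_k = 5^{k-1}\ep$ gives $\vc{R}_\ep(\vc{h}_1,\dots,\vc{h}_N)[\bfphi] = \Lambda_i$ on $B_{r_1}(\vc{h}_i) = B_\ep(\vc{h}_i)$ for each $i$. For the identity property \eqref{E12RR}, I would argue iteratively from the innermost factor outward: by \eqref{E12} of Proposition \ref{EP1}, the map $\vc{R}_{5^{N-1}\ep}(\vc{h}_N)$ coincides with the identity outside $B_{2\cdot 5^{N-1}\ep}(\vc{h}_N)$, so on the complement of this ball $\vc{R}_{5^{N-1}\ep}(\vc{h}_N)[\bfphi]=\bfphi$. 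Composing with $\vc{R}_{5^{N-2}\ep}(\vc{h}_{N-1})$ and again invoking \eqref{E12}, the composition equals $\bfphi$ off $B_{2\cdot 5^{N-1}\ep}(\vc{h}_N)\cup B_{2\cdot 5^{N-2}\ep}(\vc{h}_{N-1})$, and so on; after $N$ steps we obtain identity on the complement of $\cup_{i=1}^N B_{2\cdot 5^{i-1}\ep}(\vc{h}_i)$, which is precisely \eqref{E12RR}. The support claim \eqref{E15R} follows from the same iterative scheme applied to \eqref{E15}: each factor enlarges the support by at most $2\cdot 5^{k-1}\ep$, and the cumulative enlargement is bounded by $2\cdot 5^N\ep$.

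The only genuinely delicate point, and the one place where one needs to be slightly careful, is to ensure that the hypotheses of Lemma \ref{LB1} are invoked in the correct order relative to the ordering of the composition in \eqref{CC1}. Since in \eqref{CC1} the smallest radius $\ep$ is attached to the outermost operator $\vc{R}_\ep(\vc{h}_1)$ and the radii grow by factor $5$ as one moves inward, this matches exactly the ordering $r_{k+1}\ge 5 r_k$ required by Lemma \ref{LB1}, and the conclusion asserts constancy on the \emph{smallest} ball, namely $B_{r_1}(\vc{h}_i) = B_\ep(\vc{h}_i)$ for every $i$. No further work is required, as all other bullets reduce to $N$-fold iteration of properties already established in Proposition \ref{EP1}.
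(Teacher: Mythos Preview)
Your proposal is correct and matches the paper's approach exactly: the paper does not give a detailed proof of Proposition \ref{EP1R} but simply states that ``the properties of the operator $\vc{R}_\ep (\vc{h}_1, \cdots, \vc{h}_N)$ can be easily deduced from Proposition \ref{EP1} and Lemma \ref{LB1}'', which is precisely the bookkeeping you have spelled out. Your verification that the radii $r_k = 5^{k-1}\ep$ satisfy the hypothesis $r_{k+1}\ge 5r_k$ of Lemma \ref{LB1}, and your iterative arguments for \eqref{E12RR} and \eqref{E15R}, fill in the details the paper leaves implicit.
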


Similarly to the preceding part, we may compute the gradients with respect to the parameters 
$\vc{h}_i$, $i=1,\cdots, N$. A straightforward application of formula \eqref{formula} yields:
\begin{align} 
\nabla_{\vc{h}_i} &\vc{R}_\ep(\vc{h}_1, \cdots, \vc{h}_N) [\bfphi] \br &= 
\vc{R}_\ep (\vc{h}_1, \cdots, \vc{h}_{i-1}) \Big[ \Grad \vc{R}_{5^{i-1} \ep}(\vc{h}_i, \cdots, \vc{h}_N) [\bfphi]
- \vc{R}_{5^{i-1} \ep}(\vc{h}_i) \left[ \Grad \vc{R}_{5^i \ep}(\vc{h}_{i+1}, \cdots, \vc{h}_N) [\bfphi]                           \right]  \Big] \br 	
&= 
\vc{R}_\ep (\vc{h}_1, \cdots, \vc{h}_{i-1}) \Big[ \Grad \vc{R}_{5^{i-1} \ep}(\vc{h}_i, \cdots, \vc{h}_N) [\bfphi] \Big] \br  &
\quad - \vc{R}_\ep (\vc{h}_1, \cdots, \vc{h}_{i}) \Big[ \Grad \vc{R}_{5^i \ep}(\vc{h}_{i+1}, \cdots, \vc{h}_N) [\bfphi]                                                 \Big],\ i = 1,\cdots, N,
\label{chain}
	\end{align}
with the convention  
\[
\vc{R}_\ep (\vc{h}_1, \cdots, \vc{h}_{0}) = 
\vc{R}_{5^N \ep} (\vc{h}_{N+1}, \cdots, \vc{h}_N) = {\rm Id}.
\]

\section{Convergence: Proof of Theorem \ref{Thm:main}}

\label{sec:conv}

Let  $\bfphi \in \DC ([0,T) \times \Omega; \R^d)$, $\Div \bfphi = 0$ be a smooth solenoidal function. 
Our ultimate goal is to tackle the infinitely many bodies case. To do so, we want to plug $\vc{R}_\ep ( \vc{h}^\ep_1, \cdots, \vc{h}^\ep_{M(\ep)})[ \bfphi ]$ as a test function in the ``relaxed'' momentum balance \eqref{r2} and perform the limit $\ep \to 0$ (in which case we also have $M(\ep)\to\infty$). 
It follows from Proposition \ref{EP1R}, notably \eqref{E13R}, \eqref{E12R} and \eqref{E15R}, that 
$\vc{R}_\ep ( \vc{h}^\ep_1, \cdots, \vc{h}^\ep_{M(\ep)})[ \bfphi ]$ belongs to the class \eqref{r3} and therefore 
represent an eligible test function for \eqref{r2} as long as we check the regularity of its time derivative. 
This will be done in the next section. 

\subsection{Error estimates for test functions}

Given a test function  $\bfphi \in \DC ([0,T) \times \Omega; \R^d)$, $\Div \bfphi = 0$, the time derivative 
of its approximation $\vc{R}_\ep (\vc{h}^\ep_1, \cdots, \vc{h}^\ep_{M(\ep)} )[\bfphi]$
can be computed directly from formula \eqref{chain}:
\begin{align} 
	\partial_t &\vc{R}_\ep (\vc{h}^\ep_1, \cdots, \vc{h}^\ep_{M(\ep)} )[\bfphi] = 
\vc{R}_\ep (\vc{h}^\ep_1, \cdots, \vc{h}^\ep_{M(\ep)} )[\partial_t \bfphi] + 
\sum_{i=1}^{M(\ep)} \nabla_{\vc{h}_i} \vc{R}_\ep (\vc{h}^\ep_1, \cdots, \vc{h}^\ep_{M(\ep)} )[\bfphi] \cdot \vc{Y}^\ep_i \br 
&= \vc{R}_\ep (\vc{h}^\ep_1, \cdots, \vc{h}^\ep_{M(\ep)} )[\partial_t \bfphi] \br 
&+ \sum_{i=1}^{M(\ep)} \vc{R}_\ep (\vc{h}_1^\ep, \cdots, \vc{h}_{i-1}^\ep) \Big[ \Grad \vc{R}_{5^{i-1} \ep}(\vc{h}_i^\ep, \cdots, \vc{h}_{M(\ep)}^\ep) [\bfphi] \cdot \vc{Y}^\ep_i \br &
- \vc{R}_{5^{i-1} \ep}(\vc{h}_i^\ep) \left[ \Grad \vc{R}_{5^i \ep}(\vc{h}_{i+1}^\ep, \cdots, \vc{h}_{M(\ep)}^\ep) [\bfphi] \cdot \vc{Y}^\ep_i                           \right]  \Big]
 \label{TD}
\end{align}
As a matter of fact, the functions $\vc{h}^\ep$ are merely Lipschitz; whence the identity for time derivatives 
\[
\frac{\D }{\dt} \vc{h}^\ep_i (t) = \vc{Y}^\ep_i 
\ \mbox{holds for only for a.a.}\ t \in (0,T).
\]
Still formula \eqref{TD} as well as eligibility  of $\vc{R}_\ep (\vc{h}^\ep_1, \cdots, \vc{h}^\ep_{M(\ep)} )[\bfphi]$
as a test function in \eqref{r2} can be verified by a density argument.

To derive the error estimates on the difference 
\[
\bfphi - \vc{R}_\ep ( \vc{h}^\ep_1, \cdots, \vc{h}^\ep_{M(\ep)} )[ \bfphi ] ,\ \bfphi \in \DC([0, T) \times \Omega; \R^d),\ \Div \bfphi = 0
\]
we use essentially two facts: 
\begin{itemize}
	
	\item For any fixed $t \in [0,T)$:
	\begin{equation} \label{prop1}
	\bfphi (t, \cdot)  - \vc{R}_\ep ( \vc{h}^\ep_1 , \cdots, \vc{h}^\ep_{M(\ep)} )[ \bfphi ] (t, \cdot) = 0 \ \mbox{outside the union of balls}\ \cup_{i=1}^{M(\ep)} B_{10^{M(\ep)} \ep}[\vc{h}^\ep_i (t) ],
	\end{equation}
which far from being optimally stated consequence of \eqref{E12RR};

	\item $\bfphi$ is smooth, in particular Lipschitz in $[0,T] \times \Omega$.
	
	\end{itemize}
In view of the bounds \eqref{E14bisR}, \eqref{E14R}, we get 
\begin{equation}\label{Rphi}
\| \vc{R}_\ep ( \vc{h}^\ep_1, \cdots, \vc{h}^\ep_{M(\ep)} )[ \bfphi ] \|_{W^{1,p}(\R^d)} \leq c(p)^{M(\ep)} \| \bfphi \|_{W^{1,p}(\R^d)}.
\end{equation}
By virtue of \eqref{prop1}, it is enough to estimate
\begin{equation*}
	\| ( \bfphi - \vc{R}_\ep ( \vc{h}^\ep_1, \cdots, \vc{h}^\ep_{M(\ep)} )[ \bfphi ] )(t, \cdot) \|_{W^{1,p}(\cup_{i=1}^{M(\ep)} B_{10^{M(\ep)} \ep}[\vc{h}^\ep_i (t) ]; \R^d)} .
\end{equation*}
 If we take $\max\{c(p)^{M(\ep)},10^{M(\ep)}\} \sim \ep^{-\alpha}$, i.e, $M(\ep)\sim -{\alpha}\log\ep$ for some $\alpha\in (0,5/7)$, then we can estimate:
\begin{equation*}
\|\bfphi\|^{p}_{W^{1,p}(\cup_{i=1}^{M(\ep)}B_{10^{M(\ep)} \ep}[\vc{h}^\ep_i (t) ]; \R^d)} \leq \|\bfphi\|^{p}_{C^{1}(\Omega)}M(\ep)|B_{10^{M(\ep)}\ep}|\leq \|\bfphi\|^{p}_{C^{1}(\Omega)}M(\ep)|10^{M(\ep)}\ep|^3 \aleq -\log\ep |\ep^{1-\alpha}|^3,
\end{equation*}
which tends to zero as $\ep\rightarrow 0$. Similarly we can estimate $\vc{R}_\ep ( \vc{h}^\ep_1, \cdots, \vc{h}^\ep_{M(\ep)} )[ \bfphi ]$ with the help of \eqref{Rphi} and conclude that
\begin{equation} \label {error1}
	\| ( \bfphi - \vc{R}_\ep ( \vc{h}^\ep_1, \cdots, \vc{h}^\ep_{M(\ep)} )[ \bfphi ] )(t, \cdot) \|_{W^{1,p}(\R^d; \R^d)} \to 0 
	\ \mbox{uniformly for}\ t \in [0,T] \ \mbox{for any}\ 1 \leq p < \infty 
\end{equation} 
for any $\bfphi \in C^1_c([0, T) \times \Omega; \R^d),\ \Div \bfphi = 0$.

As for the time derivative, we use formula \eqref{TD} obtaining 
\begin{align}
&\partial_t \bfphi - \partial_t \vc{R}_\ep ( \vc{h}^\ep_1, \cdots, \vc{h}^\ep_{M(\ep)} )[ \bfphi ] = 
\partial_t \bfphi -  \vc{R}_\ep ( \vc{h}^\ep_1, \cdots, \vc{h}^\ep_{M(\ep)} )[ \partial_t \bfphi ]  \br &
+ \sum_{i=1}^{M(\ep)} \vc{R}_\ep (\vc{h}_1^{\ep}, \cdots, \vc{h}_{i-1}^{\ep}) \Big[ \Grad \vc{R}_{5^{i-1} \ep}(\vc{h}_i^{\ep}, \cdots, \vc{h}_{M(\ep)}^{\ep}) [\bfphi] \cdot \vc{Y}^\ep_i
\br &- \vc{R}_{5^{i-1} \ep}(\vc{h}_i^{\ep}) \left[ \Grad \vc{R}_{5^i \ep}(\vc{h}_{i+1}^{\ep}, \cdots, \vc{h}_{M(\ep)}^{\ep}) [\bfphi] \cdot \vc{Y}^\ep_i                           \right]  \Big],
\nonumber 
\end{align}
where, similarly to the above, 
\begin{equation} \label {error2}
	\| ( \partial_t \bfphi - \vc{R}_\ep ( \vc{h}^\ep_1, \cdots, \vc{h}^\ep_{M(\ep)} )[ \partial_t \bfphi ] )(t, \cdot) \|_{W^{1,p}(\R^d; \R^d)} \to 0 
	\ \mbox{uniformly in}\ t \in [0,T] \ \mbox{for any}\ 1 \leq p < \infty. 
\end{equation}
Finally, the second error term can be estimated with the help of \eqref{E12RR},
\begin{align} 
&\Big| \sum_{i=1}^{M(\ep)} \vc{R}_\ep (\vc{h}_1^{\ep}, \cdots, \vc{h}_{i-1}^{\ep})  \Big[ \Grad \vc{R}_{5^{i-1} \ep}(\vc{h}_i^{\ep}, \cdots, \vc{h}_{M(\ep)}^{\ep}) [\bfphi] \cdot \vc{Y}^\ep_i \br&
- \vc{R}_{5^{i-1} \ep}(\vc{h}_i^{\ep}) \left[ \Grad \vc{R}_{5^i \ep}(\vc{h}_{i+1}^{\ep}, \cdots, \vc{h}_{M(\ep)}^{\ep}) [\bfphi] \cdot \vc{Y}^\ep_i                          \right]  \Big] \Big| \br 
& \quad \quad \leq \mathds{1}_{ \cup_{i = 1}^N B_{10^{M(\ep)} \ep}(\vc{h}^\ep_i) }
\sum_{i=1}^{M(\ep)} |\vc{Y}^\ep_i | \left(  \Big| \vc{R}_\ep (\vc{h}_1^{\ep}, \cdots, \vc{h}_{i-1}^{\ep})   \Big[ \Grad \vc{R}_{5^{i-1} \ep}(\vc{h}_i^{\ep}, \cdots, \vc{h}_{M(\ep)}^{\ep}) [\bfphi] \Big] \Big| \right. \br &\quad \quad \quad \quad \quad \quad + 
\left. \vc{R}_\ep (\vc{h}_1^{\ep}, \cdots, \vc{h}_{i}^{\ep}) \Big[ \Grad \vc{R}_{5^i \ep}(\vc{h}_{i+1}^{\ep}, \cdots, \vc{h}_{M(\ep)}^{\ep}) [\bfphi] \Big] \right)  
\label{error3} 
\end{align}
where, by virtue of \eqref{E14bisR}, \eqref{E14R}, 
\begin{align} 
&\left\| \vc{R}_\ep (\vc{h}_1^{\ep}, \cdots, \vc{h}_{i-1}^{\ep})   \Big[ \Grad \vc{R}_{5^{i-1} \ep}(\vc{h}_i^{\ep}, \cdots, \vc{h}_{M(\ep)}^{\ep}) [\bfphi] \Big]  \right\|_{L^p(\R^d; \R^{d \times d})} \br  &\quad + 
\left\|  \vc{R}_\ep (\vc{h}_1^{\ep}, \cdots, \vc{h}_{i}^{\ep}) \Big[ \Grad \vc{R}_{5^i \ep}(\vc{h}_{i+1}^{\ep}, \cdots, \vc{h}_{M(\ep)}^{\ep}) [\bfphi] \Big]  \right\|_{L^p(\R^d; \R^{d \times d})}	
\br	&\quad \leq c(p)^{M(\ep)} 
\| \bfphi \|_{W^{1,p} (\R^d; \R^d)} \ \mbox{uniformly for}\ t \in (0,T)
\label{error4}
	\end{align}
and any $1 < p < \infty$.

\subsection{Convergence}

We know from estimates \eqref{u1}--\eqref{u2} that 
\begin{equation}\label{rhou}
\sqrt{\vre}\bu^{\ep} \mbox{ is bounded in }L^{\infty}(0,T;L^2(\Omega; \R^d)),
\end{equation}
\begin{equation*}
\bu^{\ep} \mbox{ is bounded in } L^2(0,T;W^{1,2}_0(\Omega; \R^d)).
\end{equation*}
Thus there exists $\bu\in L^{\infty}(0,T;L^2(\Omega)) \cap L^2(0,T;W^{1,2}_0(\Omega))$ such that, up to a subsequence,
\begin{equation}\label{con:ws}
 \sqrt{\vre}\bu^{\ep} \rightarrow \bu \mbox{ weak-${*}$ in }L^{\infty}(0,T;L^2(\Omega; \R^d)),
 \end{equation}
 \begin{equation}\label{con:w}
 \bu^{\ep} \rightarrow \bu \mbox{ weakly in }L^2(0,T;W^{1,2}_0(\Omega; \R^d)).
 \end{equation}

\medskip 

\subsubsection{Limit in the momentum equation}

Our ultimate goal is to perform the limit in the momentum equation \eqref{r2}, with the test function 
$\vc{R}_\ep ( \vc{h}^\ep_1, \cdots, \vc{h}^\ep_{M(\ep)} )[ \bfphi ]$.

\noindent \underline{\bf Viscous term.}  

In view of \eqref{con:w} and the error estimate \eqref{error1}, it is easy to see 

\begin{equation}\label{I1}
\int\limits_{0}^{T} \int\limits_{\Omega} \mathbb{S}(\Ds \bu^{\ep}): \Ds(\vc{R}_\ep ( \vc{h}^\ep_1, 
\cdots, \vc{h}^\ep_{M(\ep)} )[ \bfphi ]) \dx \dt
\to \int_0^T \intO{ \mathbb{S}(\Ds \bu):  \Ds \bfphi } \dt
\end{equation}
for any $\bfphi \in C^1_c([0,T) \times \Omega; \R^d)$, $\Div \bfphi = 0$.

\noindent\underline{\bf Convective term.}

As $\vre$ is bounded and the uniform bounds \eqref{u1}, \eqref{u2} hold, it is easy to check that 
\[
\vre \vue \otimes \vue \to \Ov{\bu \otimes \bu} \ \mbox{weakly in} \ L^p((0,T) \times \Omega; \R^{d \times d})
\]
for some $p > 1$. Consequently, in view of \eqref{error1}, 
\begin{equation} \label{I2}
\int\limits_{0}^{T} \int\limits_{\Omega} (\rho^{\ep}\bu^{\ep}\otimes \bu^{\ep}): \Grad(\vc{R}_\ep (\vc{h}^{\ep}_1, \cdots, \vc{h}^\ep_{M(\ep)})[\bfphi]) \dx \dt\rightarrow \int\limits_{0}^{T} \int\limits_{\Omega} \overline{(\bu\otimes \bu)}:\Grad \bfphi \dx \dt \mbox{ as }\ep\rightarrow 0
\end{equation}
for any  $\bfphi \in C^1_c([0,T) \times \Omega; \R^d)$, $\Div \bfphi = 0$.

\medskip 
\noindent \underline{\bf Time derivative.}
Our next goal is to establish the limit 
\begin{equation}\label{I3}
\int\limits_{0}^{T} \int\limits_{\Omega} \rho^{\ep}\bu^{\ep} \cdot\partial_t \vc{R}_\ep ( \vc{h}^\ep_1, \cdots, \vc{h}^\ep_{M(\ep)} )[ \bfphi ] \dx \dt \to   \int\limits_{0}^{T} \int\limits_{\Omega}  \bu \cdot\partial_t  \bfphi  \dx \dt.
\end{equation}
In view of the estimates \eqref{error2}, \eqref{error3} this amounts to show
\begin{equation} \label{I3bis}
\int_0^T \intO{ \mathds{1}_{ \cup_{i = 1}^N B_{10^{M(\ep)} \ep}(\vc{h}^\ep_i) } \rho^{\ep} \bu^{\ep} \cdot \sum_{i = 1}^N |\vc{Y}^\ep_i| |\vc{G}^\ep_i  |
	} \dt \to 0	,
	\end{equation} 
where, 
\begin{equation*}
\vc{G}^\ep_i= \vc{R}_\ep (\vc{h}_1^{\ep}, \cdots, \vc{h}_{i-1}^{\ep})   \Big[ \Grad \vc{R}_{5^{i-1} \ep}(\vc{h}_i^{\ep}, \cdots, \vc{h}_{M(\ep)}^{\ep}) [\bfphi] \Big] + \vc{R}_\ep (\vc{h}_1^{\ep}, \cdots, \vc{h}_{i}^{\ep}) \Big[ \Grad \vc{R}_{5^i \ep}(\vc{h}_{i+1}^{\ep}, \cdots, \vc{h}_{M(\ep)}^{\ep}) [\bfphi] \Big].
\end{equation*}
By virtue of \eqref{error4},
\begin{equation} \label{I4bis}
\left\| \vc{G}^\ep_i \right\|_{L^\infty(0,T; L^p(\Omega; \R^d ))} \leq c(p)^{M(\ep)}\| \bfphi \|_{L^{\infty}(0,T; W^{1,p} (\R^d; \R^d))}
\ \mbox{for any}\ 1 \leq p < \infty,\ i=1,\cdots, M(\ep).
\end{equation}

Let us start with the case $d=3$. 
In view of \eqref{error3} and uniform boundedness of the density, we have 
\begin{align} \label{I4}
&\left| \intO{\mathds{1}_{ \cup_{i = 1}^N B_{10^N \ep}(\vc{h}^\ep_i) } \rho^{\ep} \bu^{\ep} \cdot \sum_{i = 1}^N |\vc{Y}^\ep_i| \vc{G}^\ep_i } \right| \br 
&\quad \aleq c(\delta)c(p)^{M(\ep)} \sum_{i=1}^{M(\ep)} |\vc{Y}^\ep_i (t) | \| \vue \|_{L^6(\Omega; \R^d)} | \cup_{j=1}^N B_{10^N \ep}(\vc{h}^\ep_j) |^{\frac{5}{6} - \delta } \ \mbox{for any} \ \delta > 0.
\end{align}
Let us write 
\begin{align}
\sum_{i=1}^{M(\ep)} |\vc{Y}^\ep_i (t) | \| \vue \|_{L^6(\Omega; \R^d)}c(p)^{M(\ep)} | \cup_{j=1}^N B_{10^N \ep}(\vc{h}^\ep_j) |^{\frac{5}{6} - \delta } \br = 
\sum_{i=1}^{M(\ep)} |\mathcal{S}^i_\ep|^{\frac{1}{6}} |\vc{Y}^\ep_i (t) | \| \vue \|_{L^6(\Omega; \R^d)} \left(c(p)^{M(\ep)} | \cup_{j=1}^N  B_{10^N \ep}(\vc{h}^\ep_j) |^{\frac{5}{6} - \delta }|\mathcal{S}^i_\ep|^{- \frac{1}{6}} \right).
\nonumber
\end{align}
We deduce from the uniform bounds \eqref{u2}, \eqref{u53} 
\[|\mathcal{S}^i_\ep|^{\frac{1}{6}}
\left 
  \| |\vc{Y}^\ep_i (t) | \| \vue \|_{L^6(\Omega; \R^d)} \right\|_{L^1(0,T)} \aleq 1.
\]
Now 
\[
c(p)^{M(\ep)}| \cup_{j=1}^{M(\ep)}  B_{10^{M(\ep)} \ep}(\vc{h}^\ep_j) |^{\frac{5}{6} - \delta }|\mathcal{S}^i_\ep|^{- \frac{1}{6}} \leq c(p)^{M(\ep)}{M(\ep)} (10^{M(\ep)} \ep)^ {\frac{5}{2} - 3\delta} \ep^ {-\beta/6}.
\]
 Let us take $\max\{c(p)^{M(\ep)},10^{M(\ep)}\} \sim \ep^{-\alpha}$, i.e, $M(\ep)\sim -{\alpha}\log\ep$ for some $\alpha\in (0,5/7)$. Then we have 
\[
c(p)^{M(\ep)}M(\ep) (10^{M(\ep)} \ep)^ {\frac{5}{2} - 3\delta} \ep^ {-\beta/6} \aleq -\ep^{-\alpha}\log \ep (\ep^ {1-\alpha})^ {\frac{5}{2} - 3\delta}\ep^ {-\beta/6}= -(\log \ep) (\ep^ {\frac{5}{2}-\frac{7\alpha}{2}-\frac{\beta}{6} - 3(1-\alpha)\delta}).
\]
Hence, as a consequence of hypothesis \eqref{m17}, i.e, by taking $\beta< 15-21\alpha$,
\[
\left(c(p)^{M(\ep)} | \cup_{j=1}^N  B_{10^N \ep}(\vc{h}^\ep_j) |^{\frac{5}{6} - \delta }|\mathcal{S}^i_\ep|^{- \frac{1}{6}} \right) \to 0
\ \mbox{as long as} \ \delta > 0 \ \mbox{is small enough.}
\]

The same result can be obtained in the case $d =2$ by means of the Sobolev embedding $W^{1,2} \subset L^p$ 
for any finite $p$.

\subsubsection{Limit in the convective term}

The only thing remaining is to establish the identity:
\begin{equation} \label{c13}
\Ov{ \bu \otimes \bu } =  \bu \otimes \bu.
\end{equation}
We consider the quantity 
\[
\bfphi = \psi(t) \vc{R}_\ep (\vc{h}^\ep_1, \cdots, \vc{h}^\ep_{M(\ep)} )(t) [\phi],\ 
\psi = \psi(t) \in C^1_c[0,T), \ \phi \in C^1_c(\Omega; \R^d),\ \Div \phi = 0, 
\]
as a test function in the momentum equation \eqref{r2}. It follows that the time distributional derivative 
of 
\[
	t \in [0,T] \mapsto \intRd{ \rho^{\ep} \bu^{\ep} \cdot \vc{R}_\ep (\vc{h}^{\ep}_1, \cdots, \vc{h}^\ep_{M(\ep)} ) [\phi] }
\]
belongs to $L^q(0,T)$ for any $1 \leq q < 2$ and Arzel\` a--Ascoli theorem yields
\begin{align}
	t \in [0,T] \mapsto \intRd{ \rho^{\ep} \bu^{\ep} \cdot \vc{R}_\ep (\vc{h}^{\ep}_1, \cdots, 
		\vc{h}^\ep_{M(\ep)} ) [\phi] }
	\ \mbox{is precompact in}\ C[0,T] \br \mbox{for any} \ \phi \in C^1_c(\Omega; \R^d),\ \Div \phi = 0.
	\label{c6}
	\end{align}

Next, as $\rho^{\ep} \bu^{\ep}$ is bounded in $L^\infty(0,T; L^2(\R^d; \R^d))$, we use the error estimates for the operator $\vc{R}_{\ep}(\vc{h}^\ep_1, \cdots, \vc{h}^\ep_{M(\ep)})$ established in \eqref{error1}
to show 
\[
 \intRd{ (\rho^{\ep} \bu^{\ep}) (t, \cdot) \cdot \Big( \vc{R}_\ep (\vc{h}^{\ep}_1, \cdots, \vc{h}^\ep_{M(\ep)}) (t) ] [\phi] - \phi \Big) } \to 0 \ \mbox{ in }\ L^\infty (0,T),
\]
which, combined with \eqref{c6}, implies 
\begin{equation} \label{c8}
	\intRd{ (\vre \vue) (t, \cdot) \cdot \phi } \to 
	\intRd{ \bu(t, \cdot) \cdot \phi } \ \mbox{in}\ L^\infty (0,T)\ \mbox{for any}\ \phi \in C^1_c(\Omega; \R^d),\ \Div \phi = 0.
\end{equation}

 Using a density argument, we deduce from \eqref{c8} that 
\begin{equation} \label{c9}
	\int_\Omega (\vre \vue) (t, \cdot)  \cdot \phi \ \dx  \to 
	\int_\Omega \bu(t, \cdot) \cdot \phi \ \dx \ \mbox{in}\ L^\infty (0,T), 
\end{equation}
for any $\phi \in L^2(\Omega; \R^d),\ \Div \phi = 0 \mbox{ in }\Omega, \ \phi\cdot \vc{n}=0 \mbox{ on }\partial\Omega$.

For a genaral $\phi \in C^1_c (\Omega; \R^d)$, consider its Helmholtz decomposition  in $\Omega$, 
\[
\phi = \vc{H}[ \phi] + \Grad \Psi \ \mbox{in}\ \Omega \mbox{ with }\Grad\Psi\cdot\vc{n}=0 \mbox{ on }\partial\Omega.
\]
Accordingly, we get 
\[
	\int_\Omega (\vre \vue) (t, \cdot)  \cdot \phi \ \dx = 
		\int_\Omega (\vre \vue) (t, \cdot)  \cdot \vc{H} [\phi]  \ \dx + 	\int_\Omega (\vre \vue) (t, \cdot) \cdot \Grad \Psi \ \dx,
\]
where, in accordance with \eqref{c8},
\[
		\int_\Omega (\vre \vue) (t, \cdot) \cdot \vc{H} [\phi]  \ \dx \to 
		\int_\Omega \bu(t, \cdot)  \cdot \vc{H} [\phi]  \ \dx	= \int_\Omega \bu (t, \cdot) \cdot  \phi  \ \dx	
\ \mbox{in}\ L^\infty(0,T).
\]
Moreover, as $\vue$ is solenoidal
\begin{align} 
		\int_\Omega (\vre \vue) (t, \cdot)  \Grad \Psi  \ \dx &= 
		\int_\Omega (\vre \vue - \vue) (t, \cdot) \cdot \Grad \Psi  \ \dx \br &= 
\sum_{i=1}^{M(\ep)} (\vr_{\mc{S}^i}^{\ep} - 1)		\int_{\mathcal{S}^i_\ep} \vue \cdot \Grad \Psi \ \dx
\nonumber
\end{align}
for a.a. $t \in (0,T)$. Thus it follows from the uniform bounds established in \eqref{u2} that 
\[
	\int_\Omega (\vre \vue) (t, \cdot)  \cdot \Grad \Psi  \ \dx \to 0 \ \mbox{in}\ L^2(0,T).
	\]
and we may infer that	
\[
\int_\Omega (\vre \vue) (t, \cdot)  \phi  \ \dx \to 
\int_\Omega \bu (t, \cdot)  \phi  \ \dx	
\ \mbox{in}\ L^2(0,T) \ \mbox{for any}\ \phi \in C_c^1(\Omega;\R^d).
\]
By density, we extend the conclusion to square integrable function, 
\begin{equation} \label{c10}
\int_\Omega (\vre \vue) (t, \cdot) \cdot \phi  \ \dx \to 
\int_\Omega \bu (t, \cdot) \cdot  \phi  \ \dx	
\ \mbox{in}\ L^2(0,T) \ \mbox{for any}\ \phi \in L^2(\Omega; \R^d).
\end{equation}
Equivalently, we can extend the function $\vue$ by zero in $\R^d \setminus \Omega$ and obtain:
\begin{equation} \label{c11}
	\vre \vue \to \bu \ \mbox{in}\ L^2(0,T; L^2_{\rm weak} (\R^d; \R^d)).
\end{equation}
Since $L^2_{\rm weak}(K; \R^d)$ is compactly embedded in the dual $W^{-1,2}(K; \R^d)$ for any compact $K \subset \R^d$, the desired conclusion 
\begin{equation} \label{c12} 
	\int_0^T \intRd{ \vre \vue \otimes \vue : \Grad \bfphi } \dt \to 
\int_0^T \intRd{   \bu \otimes \bu : \Grad \bfphi } \dt \ \mbox{for any}\ \bfphi \in C^1_c([0,T) \times \Omega; \R^d)
\end{equation}
follows. 

We infer from the above discussion on the passing to the limit as $\ep\rightarrow 0$ that the limit velocity of $\bu^{\ep}$ is given by $\bu$ where
$\bu$ is a weak solution of the Navier--Stokes system \eqref{m1}--\eqref{m3} satisfying the energy inequality \eqref{m26}.

\def\cprime{$'$} \def\ocirc#1{\ifmmode\setbox0=\hbox{$#1$}\dimen0=\ht0
	\advance\dimen0 by1pt\rlap{\hbox to\wd0{\hss\raise\dimen0
			\hbox{\hskip.2em$\scriptscriptstyle\circ$}\hss}}#1\else {\accent"17 #1}\fi}

%\bibliography{citace}

%\bibliographystyle{plain}

\end{document}